\newcommand{\Qp}{\mathbf{Q}_p}
\newcommand{\Qpp}{\mathbf{Q}_{p^2}}
\newcommand{\Zp}{\mathbf{Z}_p}
\newcommand{\Zpp}{\mathbf{Z}_{p^2}}
\newcommand{\ZZ}{\mathbf{Z}}
\newcommand{\OO}{\mathcal{O}}
\newcommand{\OL}{\mathrm{O}}
\newcommand{\Qpbar}{\overline{\mathbf{Q}}_p}
\newcommand{\eps}{\varepsilon}
\renewcommand{\phi}{\varphi}
\renewcommand{\geq}{\geqslant}
\renewcommand{\leq}{\leqslant} 
\newcommand{\Gal}{\operatorname{Gal}}
\newcommand{\galp}{\Gal(\Qpbar/\Qp)}
\newcommand{\galpp}{\Gal(\Qpbar/\Qpp)}
\newcommand{\dfont}{\mathrm{D}}
\newcommand{\dcris}{\mathrm{D}_{\operatorname{cris}}}
\newcommand{\indpp}{\operatorname{Ind}_{\Qpp}^{\Qp}}
\newcommand{\Mat}{\operatorname{Mat}}
\newcommand{\Aut}{\operatorname{Aut}}
\newcommand{\Hom}{\operatorname{Hom}}
\newcommand{\GL}{\operatorname{GL}}
\newcommand{\LT}{\operatorname{LT}}
\newcommand{\pmat}[1]{\begin{pmatrix} #1 \end{pmatrix}}
\newcommand{\dcroc}[1]{[\![ #1 ]\!]}
\newcommand{\calE}{\mathcal{E}}
\newcommand{\calR}{\mathcal{R}}
\newcommand{\diam}[1]{\langle #1 \rangle}
\newcommand{\bcris}{\mathbf{B}_{\operatorname{cris}}} 
\newcommand{\bdr}{\mathbf{B}_{\operatorname{dR}}}  
\newcommand{\Fil}{\operatorname{Fil}}
\newcommand{\res}{\operatorname{res}}
\author{Laurent Berger}
\address{Universit\'e de Lyon \\
UMPA ENS Lyon \\
46 all\'ee d'Italie \\
69007 Lyon \\
France}
\email{laurent.berger@umpa.ens-lyon.fr}
\urladdr{www.umpa.ens-lyon.fr/\~{}lberger/}
\date{March 2009}
\title{A $p$-adic family of dihedral $(\varphi,\Gamma)$-modules}
\subjclass{11F33, 11F80, 11S20, 11S31, 12H25, 14D15, 14F30}
\keywords{$(\varphi,\Gamma)$-modules, Galois representations, Lubin-Tate characters, $p$-adic Hodge theory, Wach modules, trianguline representations, $p$-adic families, $p$-adic modular forms}
\begin{document}

\begin{abstract}
The goal of this article is to construct explicitly a $p$-adic family of representations (which are dihedral representations), to construct their associated $(\varphi,\Gamma)$-modules by writing down explicit matrices for $\varphi$ and for the action of $\Gamma$, and finally to determine which of these are trianguline.
\end{abstract}

\begin{altabstract}
L'objet de cet article est de construire explicitement une famille $p$-adique de repr\'esentations (qui sont des repr\'esentations di\'edrales), de construire les $(\varphi,\Gamma)$-modules qui leurs sont associ\'es en donnant des matrices explicites pour $\varphi$ et pour l'action de $\Gamma$, et finalement de d\'eterminer lesquelles sont triangulines.
\end{altabstract}

\maketitle

\tableofcontents

\setlength{\baselineskip}{18pt}

\section*{Introduction}
A fundamental tool in the theory of $p$-adic representations is Fontaine's construction in \cite{F90} of the $(\phi,\Gamma)$-modules attached to $p$-adic representations. These are modules over a ring of power series, and are very explicit objects which contain all of the information about the representations they are attached to. It is however not always easy to extract that information. The work of Cherbonnier-Colmez \cite{CC} and Kedlaya \cite{K5} has done much to clarify the situation, and in particular has allowed us to understand the structure of the $(\phi,\Gamma)$-modules attached to semistable representations as in \cite{LB2} and \cite{LB8}. Inspired by these constructions and the $p$-adic Langlands correspondence, Colmez has defined in \cite{C08} the notion of trianguline representation, and reinterpreted the first main result of \cite{K3} as saying that the $p$-adic representations coming from finite slope overconvergent $p$-adic modular forms are trianguline.

The goal of this modest article is to construct explicitly a $p$-adic family of representations, which are dihedral representations, to construct their $(\phi,\Gamma)$-modules by writing down explicit matrices for $\phi$ and for the action of $\Gamma$, and finally to determine which of these are trianguline. This can be seen as a very first step towards constructing a universal family of $(\phi,\Gamma)$-modules corresponding to the universal deformation space (in the sense of \cite{M89}) of a mod $p$ representation of $\galp$. Our results extend without trouble to potentially abelian representations, but the general case will require new ideas.

We now give a more precise description of our results. Let $\chi_2 : \galpp \to \Zpp^\times$ be the character associated to the Lubin-Tate module over $\Zpp$ for the uniformizer $p$ of $\Qpp$. Every element $x \in \Zpp^\times$ can be written in a unique way as $x= \omega(x)\diam{x}$ where $\omega(x)^{p^2-1} =1$ and $\diam{x} \in 1+p\Zpp$. If $g \in \galpp$, then we define $\omega_2(g) = \omega(\chi_2(g))$ and $\diam{g}_2 = \diam{\chi_2(g)}$. Since $\diam{g}_2 \in 1+p\Zpp$, the expression $\diam{g}_2^s$ makes sense if $s\in \Zp$ and the representations $\indpp(\omega_2(\cdot)^r \diam{\cdot}_2^s)$ $p$-adically interpolate the $\indpp(\chi_2^h)$ with $h\in \ZZ$. Our first result is an explicit construction of the $(\phi,\Gamma)$-modules associated to these representations, and we briefly recall what these objects are. 

The Robba ring is the ring $\calR$ consisting of power series $f(X)=\sum_{k \in \ZZ} a_k X^k$ with $a_k \in \Qp$ and such that $f(X)$ converges on an annulus $r_f < |X|_p < 1$ where $r_f$ depends on $f$. This ring is endowed with a frobenius $\phi$ given by $\phi(f)(X)=f((1+X)^p-1)$, and with an action of $\Gamma=\Gal(\Qp(\mu_{p^\infty})/\Qp)$ given by $\gamma(f)(X)=f((1+X)^{\chi(\gamma)}-1)$ where $\chi : \Gamma \to \Zp^\times$ is the cyclotomic character. A $(\phi,\Gamma)$-module over $\calR$ is a free module of finite rank over $\calR$ endowed with a semilinear frobenius $\phi$ such that $\Mat(\phi) \in \GL_d(\calR)$ and a semilinear continuous action of $\Gamma$ commuting with $\phi$. By combining the aforementioned constructions of Fontaine, Cherbonnier-Colmez and Kedlaya, we get a functor $V \mapsto \dfont(V)$ which to every $p$-adic representation associates a $(\phi,\Gamma)$-module over $\calR$. This functor gives an equivalence of categories between $p$-adic representations and \'etale $(\phi,\Gamma)$-modules over $\calR$. Finally, let $Q=\phi(X)/X=((1+X)^p-1)/X$ and let $Q_2=\phi(Q)$ so that in some suitable sense, $Q_2/Q^p=1\bmod{p}$ and the expression $(Q_2/Q^p)^u$ makes sense if $u \in \Zp$.

\begin{enonce*}{Theorem A}
If $r \in \ZZ$ and $u \in \Zp$ and $s=r+(p+1)u$, then the $(\phi,\Gamma)$-module associated to $\indpp(\omega_2(\cdot)^r \diam{\cdot}_2^s)$ has a basis in which
\[ \Mat(\phi) = \pmat{0 & 1 \\ Q_2^r (Q_2/Q^p)^u & 0} \]
and
\[ \Mat(\gamma) = \pmat{\chi(\gamma)^r \diam{\chi(\gamma)}^u (1+\OL(X)) & 0 \\ 0 & \chi(\gamma)^r \diam{\chi(\gamma)}^u (1+\OL(X))}, \]
where the $1+\OL(X)$ are two power series belonging to $1+X\Zp\dcroc{X}$.
\end{enonce*}

The proof of this result (theorem \ref{pgmisind}) is by $p$-adic interpolation. If $h \in \ZZ$, then the representation $\indpp(\chi_2^h)$ is crystalline and we can compute its $(\phi,\Gamma)$-module by using the theory of Wach modules of \cite{LB4}. One then only needs to change the basis so that the matrices of $\phi$ and $\gamma \in \Gamma$ become continuous functions of $h$. 

One can then work concretely with the representation $\indpp(\omega_2(\cdot)^r \diam{\cdot}_2^s)$ and our next result (theorem \ref{indtrig}) tells us exactly when it is trianguline. A $p$-adic representation is said to be trianguline if its associated $(\phi,\Gamma)$-module over $\calR$ is an iterated extension of $(\phi,\Gamma)$-modules of rank $1$, after possibly extending scalars.

\begin{enonce*}{Theorem B}
The representation $\indpp(\omega_2(\cdot)^r \diam{\cdot}_2^s)$ is trianguline if and only if $s \in \ZZ$ and $r=s \bmod{p+1}$.
\end{enonce*}

In particular, by combining theorem 6.3 of \cite{K3} and proposition 4.3 of \cite{C08}, we see that if $s \notin \ZZ$, then the representation $\indpp(\omega_2(\cdot)^r \diam{\cdot}_2^s)$ does not arise from a finite slope overconvergent $p$-adic modular form. 

It is not hard to analytify our constructions and hence to get a two-dimensional representation over $\Zp\{T\}$. An analogue of theorem A then gives a corresponding family of $(\phi,\Gamma)$-modules over $\Zp\{T\}$ and theorem B tells us about the trianguline locus for that family. Note that one can twist $\indpp(\omega_2(\cdot)^r \diam{\cdot}_2^s)$ by a character of $\galp$ and this way one obtains a three-dimensional family of representations, sitting inside the usually five-dimensional (see \S9 of \cite{FM}) universal deformation space of a given mod $p$ representation. These families are in some sense orthogonal to the ones constructed in \cite{BLZ}. Can one combine them to get an explicit family over some four-dimensional space? 

\section{A family of dihedral representations}
We start by constructing the representations $\indpp(\omega_2(\cdot)^r \diam{\cdot}_2^s)$ in a way which shows that they are actually defined over $\Qp$. Let $\chi_2 : \galpp \to \Zpp^\times$ be the character associated to the Lubin-Tate module over $\Zpp$ for the uniformizer $p$ of $\Qpp$ and denote by $\Qpp^{\LT}$ the fixed field of $\ker(\chi_2)$. If $\sigma : \Zpp \to \Zpp$ denotes the absolute frobenius, then the group $\Gal(\Qpp^{\LT} / \Qp)$ is naturally isomorphic to $\Zpp^\times \rtimes \ZZ/2\ZZ$, where the map $\ZZ/2\ZZ \to \Aut(\Zpp^\times)$ is given by $\eps \mapsto \sigma^\eps$.

Every element $x \in \Zpp^\times$ can be written in a unique way as $x= \omega(x)\diam{x}$ where $\omega(x)^{p^2-1} =1$ and $\diam{x} \in 1+p\Zpp$. If $g \in \galpp$, then we define $\omega_2(g) = \omega(\chi_2(g))$ and $\diam{g}_2 = \diam{\chi_2(g)}$. If $r \in \ZZ/(p^2-1)\ZZ$ and $s \in \Zp$, then we have a character $\eta_{r,s} : \Zpp^\times \to \Zpp^\times$ given by $x \mapsto  \omega(x)^r \diam{x}^s$ where 
\begin{align*}
\diam{x}^s & = (1+(\diam{x}-1))^s \\
& = \sum_{k \geq 0} \binom{s}{k}(\diam{x}-1)^k \in 1+ p \Zpp.
\end{align*}
If $d \in \Zpp$ is some element such that $\Zpp=\Zp[\sqrt{d}]$, then we have a homomorphism $\Zpp^\times \rtimes \ZZ/2\ZZ \to \GL_2(\Zp)$ given by 
\[ (x+y\sqrt{d},0) \mapsto \pmat{x & dy \\ y & x} \quad\text{and}\quad 
(x+y\sqrt{d},1) \mapsto \pmat{x & -dy \\ y & -x}.  \] 
By composing $\chi_2$ and $\eta_{r,s}$ we get a representation 
\[ \rho_{r,s} : \galp \to \GL_2(\Zp), \]
whose underlying $\Zp$-module we denote by $T_r(s)$. We also let $V_r(s) = \Qp \otimes_{\Zp} T_r(s)$.

\begin{lemm}\label{repcong}
If $s_1 = s_2 \bmod{p^k}$, then $T_r(s_1) = T_r(s_2) \bmod{p^{k+1}}$.
\end{lemm}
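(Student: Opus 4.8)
The plan is to peel off from $\rho_{r,s}$ all the structural data that does not involve $s$, and to isolate the single place where $s$ enters, namely the character $\eta_{r,s}\colon \Zpp^\times \to \Zpp^\times$. By construction $\rho_{r,s}$ is the composite of the fixed surjection $\galp \twoheadrightarrow \Zpp^\times \rtimes \ZZ/2\ZZ$, of $(\eta_{r,s},\mathrm{id})$, and of the fixed embedding $\Zpp^\times \rtimes \ZZ/2\ZZ \hookrightarrow \GL_2(\Zp)$; the first and third maps have nothing to do with $s$. Since the embedding sends $x+y\sqrt{d}$ to a matrix whose entries ($x$, $dy$, $y$, up to signs) are $\Zp$-linear in the coordinates $(x,y)$ with respect to the basis $1,\sqrt{d}$ of $\Zpp$ over $\Zp$, it carries a congruence mod $p^{k+1}\Zpp$ to a congruence of matrices mod $p^{k+1}$. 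So it is enough to prove: if $s_1 = s_2 \bmod p^k$, then $\eta_{r,s_1}(x) = \eta_{r,s_2}(x) \bmod p^{k+1}$ for every $x \in \Zpp^\times$. Writing $x = \omega(x)\diam{x}$, the Teichm\"uller factor $\omega(x)^r$ is independent of $s$ and is a unit, so this reduces to $\diam{x}^{s_1} = \diam{x}^{s_2} \bmod p^{k+1}$; and since $\diam{x}^{s_2}$ is a unit and $s \mapsto \diam{x}^s$ is a homomorphism $\Zp \to 1+p\Zpp$, this is equivalent to $\diam{x}^{\,s_1-s_2} = 1 \bmod p^{k+1}$.

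Next I would establish the underlying $p$-adic estimate. Write $s_1 - s_2 = p^k t$ with $t \in \Zp$, so $\diam{x}^{\,s_1-s_2} = (\diam{x}^{\,p^k})^t$. Because $\Qpp/\Qp$ is unramified, $p$ is still a uniformizer of $\Zpp$, and inspecting the binomial expansion shows that raising to the $p$-th power sends $1 + p^j\Zpp$ into $1 + p^{j+1}\Zpp$ for every $j \geq 1$ (each term $\binom{p}{i}p^{ij}$, $1 \leq i \leq p$, lies in $p^{j+1}\Zpp$ — slightly tighter but still true when $p=2$). Iterating, $\diam{x}^{\,p^k} \in 1 + p^{k+1}\Zpp$. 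Finally $1 + p^{k+1}\Zpp$ is stable under exponentiation by $\Zp$: for $y = 1 + p^{k+1}w$ one has $y^t = \sum_{j \geq 0}\binom{t}{j}(p^{k+1}w)^j$, every term with $j \geq 1$ lying in $p^{k+1}\Zpp$. Hence $(\diam{x}^{\,p^k})^t \in 1 + p^{k+1}\Zpp$, i.e. $\diam{x}^{\,s_1-s_2} = 1 \bmod p^{k+1}$, as wanted. (Alternatively one can argue straight from $\diam{x}^s = \sum_k \binom{s}{k}(\diam{x}-1)^k$: when $s_1 = s_2 \bmod p^k$ one checks $\binom{s_1}{k} = \binom{s_2}{k} \bmod p$ for all $k \geq 1$ using $v_p(k!) \leq k-1$, and $(\diam{x}-1)^k \in p^k\Zpp$, so $\diam{x}^{s_1}-\diam{x}^{s_2} \in p^{k+1}\Zpp$.)

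Putting the pieces together, for every $g \in \galpp$ we get $\eta_{r,s_1}(\chi_2(g)) = \eta_{r,s_2}(\chi_2(g)) \bmod p^{k+1}\Zpp$, hence, reading off coordinates in the basis $1,\sqrt{d}$ and feeding them through the embedding into $\GL_2(\Zp)$ (the $\ZZ/2\ZZ$-part only inserting signs that do not depend on $s$), $\rho_{r,s_1}(g) \equiv \rho_{r,s_2}(g) \bmod p^{k+1}$ for all $g \in \galp$. This is exactly the assertion $T_r(s_1) = T_r(s_2) \bmod p^{k+1}$.

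I do not expect a genuine obstacle: the proof is a bookkeeping reduction followed by one elementary estimate. The only point that needs a moment of care is the behaviour of $p$-th powers when $p = 2$ in the step $1 + p^j\Zpp \xrightarrow{x \mapsto x^p} 1 + p^{j+1}\Zpp$, where the cross-terms are tighter; this is dispatched by looking at the binomial coefficients directly, using only that $p$ is a uniformizer of the unramified ring $\Zpp$.
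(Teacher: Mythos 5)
Your proof is correct and follows essentially the same route as the paper: the paper's own proof simply asserts the congruence $(1+(\diam{x}-1))^{s_1} \equiv (1+(\diam{x}-1))^{s_2} \bmod{p^{k+1}}$ and notes that everything else in the construction of $\rho_{r,s}$ is independent of $s$, which is exactly the reduction you carry out, with the elementary estimate (iterated $p$-th powers on $1+p\Zpp$, stability of $1+p^{k+1}\Zpp$ under $\Zp$-exponents) spelled out. One minor caveat: in your parenthetical alternative the congruence $\binom{s_1}{m}\equiv\binom{s_2}{m}\bmod{p}$ is too weak for $k\geq 2$ (one needs $\binom{s_1}{m}\equiv\binom{s_2}{m}\bmod{p^{k+1-m}}$, which does follow from $v_p(m!)\leq m-1$), but this does not affect your main argument, which is complete.
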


\begin{proof}
If $s_1 = s_2 \bmod{p^k}$, then $(1+(\diam{x}-1))^{s_1} = (1+(\diam{x}-1))^{s_2}\bmod{p^{k+1}}$ for all $x \in \Zpp^\times$ and therefore the same is true of $\rho_{r,s_1}$ and $\rho_{r,s_2}$.
\end{proof}

\begin{prop}\label{specisind}
We have $\Qpp \otimes_{\Qp} V_r(s) = \indpp(\omega_2(\cdot)^r \diam{\cdot}_2^s)$.
\end{prop}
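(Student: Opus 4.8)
The plan is to restrict $\Qpp \otimes_{\Qp} V_r(s)$ to $\galpp$, where it visibly splits as a sum of two characters, and then to use an element of $\galp \smallsetminus \galpp$ to recognize it as an induced representation. First I would unwind the definition: $\rho_{r,s}$ is the composite of the projection $\galp \twoheadrightarrow \Gal(\Qpp^{\LT}/\Qp) \cong \Zpp^\times \rtimes \ZZ/2\ZZ$ with the endomorphism $(x,\eps)\mapsto (\eta_{r,s}(x),\eps)$ --- which is a group homomorphism precisely because $\sigma$ commutes with $\omega$ and with $\diam{\cdot}$, hence with $\eta_{r,s}$ --- followed by the matrix homomorphism $(x+y\sqrt d,\eps)\mapsto \smat{x & (-1)^\eps dy \\ y & (-1)^\eps x}$ of the statement.

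Next I would observe that $\galpp$ maps onto the subgroup $\Zpp^\times\rtimes\{0\}$, the image of $h \in \galpp$ being $(\chi_2(h),0)$, so that on $\galpp$ we have $\rho_{r,s}(h) = \iota(\psi(h))$, where $\psi := \eta_{r,s}\circ\chi_2 = \omega_2(\cdot)^r \diam{\cdot}_2^s$ and $\iota(x+y\sqrt d) = \smat{x & dy \\ y & x}$. A one-line computation over $\Qpp$ shows that $v_\pm := \smat{\pm\sqrt d \\ 1}$ satisfy $\iota(z)v_+ = z\,v_+$ and $\iota(z)v_- = \sigma(z)\,v_-$ for all $z$; hence, viewed inside $\Qpp^2 = \Qpp\otimes_{\Qp}V_r(s)$, these vectors give a $\galpp$-stable decomposition $\Qpp\otimes_{\Qp}V_r(s) = \Qpp v_+ \oplus \Qpp v_-$ on which $\galpp$ acts by $\psi$ on the first line and by $\sigma\circ\psi$ on the second.

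Finally I would choose $\tilde g \in \galp$ lifting $(1,1) \in \Zpp^\times\rtimes\ZZ/2\ZZ$, so that $\galp = \galpp\sqcup\galpp\tilde g$ and $\rho_{r,s}(\tilde g) = \smat{1 & 0 \\ 0 & -1}$; then $\rho_{r,s}(\tilde g)v_+ = -v_-$ and $\rho_{r,s}(\tilde g)v_- = -v_+$. Realizing $\indpp(\psi) = \Qpp[\galp]\otimes_{\Qpp[\galpp]}\Qpp e$, with $\Qpp$-basis $\{1\otimes e,\ \tilde g\otimes e\}$, I would define $f\colon \indpp(\psi)\to\Qpp\otimes_{\Qp}V_r(s)$ by $1\otimes e\mapsto v_+$ and $\tilde g\otimes e\mapsto -v_-$, and check $\galp$-equivariance on these two vectors. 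For $h\in\galpp$ this uses that conjugation by $(1,1)$ acts as $\sigma$ on $\Zpp^\times$, so $\psi(\tilde g^{-1}h\tilde g) = \sigma(\psi(h))$, which matches the action on $v_-$; the compatibility of the $\tilde g^2$-actions on both sides follows from $\tilde g^2 \mapsto (1,0)$ together with $\smat{1 & 0 \\ 0 & -1}^2 = \mathrm{I}$. Since $f$ sends a $\Qpp$-basis to a $\Qpp$-basis, it is the desired isomorphism of $\galp$-representations.

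I do not expect a serious obstacle: this is essentially the standard identification of an induction from an index-two subgroup with a semidirect-product construction, and the matrices have been rigged so that it works on the nose. The points requiring care are purely bookkeeping --- verifying that ``composing $\chi_2$ and $\eta_{r,s}$'' legitimately descends to the semidirect product (this is exactly where $\sigma\eta_{r,s} = \eta_{r,s}\sigma$ is needed), and keeping track of the conjugation action so that the second eigenline $\Qpp v_-$ carries precisely the $\tilde g$-conjugate of $\psi$; that $\sigma$ is an involution conveniently removes any ambiguity between $\sigma$ and $\sigma^{-1}$.
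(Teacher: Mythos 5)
Your proof is correct, but it proceeds differently from the paper's. The paper argues abstractly: after base change to $\Qpp$ the restriction to $\galpp$ contains the character $\psi=\omega_2(\cdot)^r\diam{\cdot}_2^s$, so Frobenius reciprocity gives a nonzero map from $\indpp(\psi)$, and Mackey's criterion (in the non-degenerate case $\psi\neq\sigma\circ\psi$) shows the induced representation is irreducible, forcing the map to be an isomorphism; the degenerate case $s=0$, $r\equiv 0\bmod{p+1}$ is dispatched separately. You instead build the intertwiner by hand: you diagonalize the $\galpp$-action on the explicit eigenvectors $v_\pm=\smat{\pm\sqrt d\\1}$, check that a lift $\tilde g$ of $(1,1)$ swaps the two eigenlines, and match this against the standard two-element-coset model of the induction. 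Your computations all check out (including the verification that $\eta_{r,s}$ commutes with $\sigma$, which is indeed what makes $\rho_{r,s}$ well defined on the semidirect product, and the conjugation formula $\psi(\tilde g^{-1}h\tilde g)=\sigma(\psi(h))$). What your route buys is uniformity --- it needs no case distinction and no irreducibility input, and it works verbatim when $\psi=\sigma\circ\psi$ --- together with an explicit basis realizing the isomorphism, which is in the spirit of the rest of the paper; what the paper's route buys is brevity. One cosmetic remark: the relevant Galois automorphism of $\Qpp/\Qp$ is the reduction of the absolute Frobenius $\sigma$, and your identification $\sigma(\sqrt d)=-\sqrt d$ is what makes $v_-$ carry $\sigma\circ\psi$; it is worth saying this explicitly since $\sigma$ is introduced in the paper as a ring endomorphism of $\Zpp$ rather than as the generator of $\Gal(\Qpp/\Qp)$.
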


\begin{proof}
In a suitable basis of $\Qpp \otimes_{\Qp} V_r(s)$, this representation is isomorphic to $\omega_2(\cdot)^r \diam{\cdot}_2^s \oplus \sigma(\omega_2(\cdot)^r \diam{\cdot}_2^s)$. If $s=0$ and $r=0\bmod{p+1}$, then the proposition is obvious; otherwise, the two characters $\omega_2(\cdot)^r \diam{\cdot}_2^s$ and $\sigma(\omega_2(\cdot)^r \diam{\cdot}_2^s)$ are distinct so that $\indpp(\omega_2(\cdot)^r \diam{\cdot}_2^s)$ is irreducible by Mackey's criterion and the proposition follows from Frobenius reciprocity.
\end{proof}
 
\section{Crystalline periods for Lubin-Tate groups}
The character $\chi_2$ extends to a map $\chi_2 : \galp \to \Zpp^\times$ which is no longer a character but satisfies the formula $\chi_2(gh)=g(\chi_2(h))\chi_2(g)$. Let $\bcris$ and $\bdr$ be the rings of periods constructed in \cite{F3} and let $t_2 \in \bcris^+$ be the element $t_E$ constructed in \S 9.3 of \cite{C02} for $E=\Qpp$ and $\pi_E=p$. 

\begin{prop}\label{prt}
The element $t_2$ has the following properties :
\begin{enumerate}
\item if $g \in \galp$, then $g(t_2)=\chi_2(g)t_2$;
\item $\phi^2(t_2)=pt_2$;
\item $t_2 \in \Fil^1 \setminus \Fil^2 \bdr$ and $\phi(t_2) \in \Fil^0 \setminus \Fil^1 \bdr$.
\end{enumerate}
\end{prop}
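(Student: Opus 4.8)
The plan is to work directly from the construction of $t_2$ as the Lubin-Tate period $t_E$ of \cite{C02}, \S 9.3, and to reduce each of the three assertions to known properties of that period together with properties of the Lubin-Tate formal group $\mathrm{LT}$ over $\Zpp$ for the uniformizer $p$. Recall that $t_E$ is built from a compatible system of $p^n$-torsion points of $\mathrm{LT}$ inside $\bcris^+$, so that it plays for $\chi_2$ exactly the role that the usual $t$ plays for the cyclotomic character.

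For (1), the Galois-equivariance $g(t_2)=\chi_2(g)t_2$ is the defining property of a Lubin-Tate period: the chosen generator $t_\mathrm{LT}$ of the Tate module transforms under $g\in\galpp$ by $\chi_2(g)\in\Zpp^\times$, hence $t_2=\log_{\mathrm{LT}}(t_\mathrm{LT})$ transforms the same way by $\Zpp$-linearity of the logarithm; one extends this to $g\in\galp$ using the cocycle relation $\chi_2(gh)=g(\chi_2(h))\chi_2(g)$ stated just above, which is precisely what is needed so that $g\mapsto (x\mapsto g(x))$ is compatible with multiplication by $\chi_2(g)$ on a rank-one $\Zpp$-module. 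For (2), one uses that the Frobenius $\phi$ on $\bcris^+$ acts on the torsion points of $\mathrm{LT}$ through the isogeny given by the Frobenius power series of the formal group (reduced mod $p$, Frobenius raises to the $p$-th power, and since the group is defined over $\Zpp=W(\Fpp)$ the relevant iterate is $\phi^2$, acting by multiplication by $p$ because $p$ is the uniformizer). Concretely $\phi^2$ corresponds to the multiplication-by-$p$ isogeny $[p]_{\mathrm{LT}}$, and applying $\log_{\mathrm{LT}}$ converts this into multiplication by $p$ on $t_2$; this is the statement $\phi^2(t_2)=pt_2$. This is exactly the content of \S 9.3 of \cite{C02} and can be cited.

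For (3), the filtration statement, one argues as follows. The element $t_2$ lies in $\Fil^1\bdr$ because $\log_{\mathrm{LT}}$ vanishes at the origin, i.e. $t_2$ reduces to $0$ in $\bdr^+/\Fil^1=\mathbf{C}_p$; and $t_2\notin\Fil^2\bdr$ because $\log_{\mathrm{LT}}(X)=X+\cdots$ has nonzero linear term, so $t_2$ is, up to a unit, a uniformizer of $\bdr^+$, just like the cyclotomic $t$. For $\phi(t_2)$: from (2) we have $\phi^2(t_2)=pt_2\in\Fil^1\setminus\Fil^2$, so applying $\phi^{-1}$ (which is an automorphism of $\bcris$ not preserving the filtration) and using multiplicativity, $\phi(t_2)$ cannot be in $\Fil^1$ — more precisely, if $\phi(t_2)\in\Fil^1\bdr$ then $\phi^2(t_2)\in\Fil^2\bdr$ would follow by the same kind of argument applied once more, contradicting $\phi^2(t_2)=pt_2\notin\Fil^2$; hence $\phi(t_2)\in\Fil^0\setminus\Fil^1$. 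The cleanest way to make the last point precise is to compare $t_2$ with the cyclotomic $t$: one knows $t\in\Fil^1\setminus\Fil^2$ and $\phi(t)=pt$, while $t_2$ and $\phi(t_2)$ are each, up to a $\bdr^+$-unit, powers of $t$ of the appropriate order; examining the Hodge-Tate weights of $\indpp(\chi_2)$ (which are $0$ and $1$) pins down that one of $t_2,\phi(t_2)$ is a unit multiple of $t$ and the other is a $\bdr^+$-unit.

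The main obstacle is the last assertion, that $\phi(t_2)\in\Fil^0\setminus\Fil^1\bdr$: the Frobenius $\phi$ does not preserve the de Rham filtration, so one genuinely needs to track how $t_2$ sits in $\bdr$ under the two different embeddings $\Qpp\hookrightarrow\bdr$ corresponding to the two embeddings of $\Qpp$ into $\Qpbar$ (equivalently, the two $p$-adic places), and to see that $t_2$ is a uniformizer at one of them while $\phi(t_2)$ is a unit at it. Everything else is a direct unwinding of the definition of the Lubin-Tate period and can be attributed to \cite{C02}.
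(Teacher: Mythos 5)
Your treatment of (1) and (2) is essentially the paper's: (1) follows from $t_2=L_E(\omega_E)$ with $g(\omega_E)=[\chi_2(g)](\omega_E)$ and the linearity of the Lubin--Tate logarithm, and (2) is handled by citation (the paper cites \S 2.4 of \cite{CEV} rather than \cite{C02}, but that is immaterial). The genuine gap is in (3), and precisely at the one non-formal point of the proposition, namely $\phi(t_2)\notin\Fil^1\bdr$. Your ``more precisely'' step --- that $\phi(t_2)\in\Fil^1$ would force $\phi^2(t_2)\in\Fil^2$, contradicting $\phi^2(t_2)=pt_2$ --- is not valid: the only way to propagate membership in the filtration through an application of $\phi$ is if $\phi$ preserves the filtration, and it does not (indeed, if it did, $t_2\in\Fil^1$ would immediately give $\phi(t_2)\in\Fil^1$ and assertion (3) would be false). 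So the contradiction you derive relies on exactly the property whose failure is the content of the statement. You flag this difficulty yourself in the last paragraph, but the repair you sketch --- reading the answer off from the Hodge--Tate weights of $\indpp(\chi_2)$ being $0$ and $1$ --- is circular as stated, since the assertion that the Lubin--Tate character has Hodge--Tate weight $1$ at the identity embedding of $\Qpp$ and $0$ at the conjugate embedding is precisely a reformulation of (3).

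What is actually required is a computation showing $\theta(\phi(t_2))\neq 0$, where $\theta:\bdr^+\to\mathbf{C}_p$ is the canonical projection. One standard route is via the norm relation: the product $t_2\cdot\phi(t_2)$ transforms under $\galpp$ by $N_{\Qpp/\Qp}(\chi_2)$, which is the cyclotomic character up to an unramified twist, and satisfies $\phi^2=p^2$; this identifies $t_2\cdot\phi(t_2)$ with a $\bdr^+$-unit multiple of $t$, and since both $t$ and $t_2$ lie in $\Fil^1\setminus\Fil^2$ one concludes $\phi(t_2)\in\Fil^0\setminus\Fil^1$. The paper avoids the issue entirely by citing \S 2.4 of \cite{CEV} for both (2) and (3); if you want a self-contained argument you must supply a computation of this kind rather than the filtration-propagation step as written.
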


\begin{proof}
Properties (2) and (3) are proved in \S 2.4 of \cite{CEV}. As for property (1), we use the notations of \S  9 of \cite{C02}. The element $t_E$ is defined as $L_E(\omega_E)$ where $\omega_E$ is constructed so that $g(\omega_E) = [\chi_2(g)] (\omega_E)$ and $L_E$ is the logarithm of the Lubin-Tate group, which implies (1).
\end{proof}

In particular, if $h \in \ZZ$, then the space $W_h = \Qpp \cdot t_2^h$ is a $\galp$-stable subspace of $\bcris$ and hence a two-dimensional representation of $\galp$. 

\begin{lemm}\label{whisind}
We have $\Qpp \otimes_{\Qp} W_h  = \indpp(\chi_2^h)$.
\end{lemm}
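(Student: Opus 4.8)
The plan is to mimic the proof of Proposition \ref{specisind}, replacing the abstract character $\eta_{r,s}$ by the crystalline period picture. First I would show that after extending scalars to $\Qpp$, the representation $\Qpp \otimes_{\Qp} W_h$ decomposes as a direct sum of two characters of $\galpp$. Concretely, $W_h = \Qpp \cdot t_2^h$ comes with its natural $\Qpp$-action, and once we are over $\Qpp$ we can also twist by the Galois action on the coefficients; the two resulting $\galpp$-stable lines are $\Qpp \cdot t_2^h$ and $\Qpp \cdot \sigma(t_2)^h$ (equivalently, the two eigenlines for the two embeddings $\Qpp \hookrightarrow \Qpp \otimes_{\Qp} \Qpp$). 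By part (1) of Proposition \ref{prt}, $g$ acts on $t_2^h$ by $\chi_2(g)^h$ for $g \in \galpp$, so one of the summands is exactly the character $\chi_2^h$; the other is its $\sigma$-conjugate $\sigma \circ \chi_2^h$. Hence $\Qpp \otimes_{\Qp} W_h \cong \chi_2^h \oplus \sigma(\chi_2^h)$ as $\galpp$-representations, exactly as in the proof of Proposition \ref{specisind}.

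Next I would invoke Mackey/Frobenius reciprocity as before. The key point is that $W_h$ is genuinely a representation of the full group $\galp$ (Proposition \ref{prt}(1) gives the cocycle relation $\chi_2(gh) = g(\chi_2(h))\chi_2(g)$ which is precisely what is needed for the formula defining $\rho_{r,s}$-type matrices to give a homomorphism on $\galp$, not just $\galpp$), so $W_h$ is an honest two-dimensional $\Qp$-representation whose restriction to the index-two subgroup $\galpp$ is $\chi_2^h \oplus \sigma(\chi_2^h)$. When $\chi_2^h \neq \sigma(\chi_2^h)$, i.e. when $h \not\equiv 0 \bmod p+1$ or $h \neq 0$ suitably interpreted, this forces $W_h \cong \indpp(\chi_2^h)$ by irreducibility (Mackey's criterion) together with the fact that $\indpp(\chi_2^h)$ and $W_h$ have the same restriction to $\galpp$, hence are isomorphic by Frobenius reciprocity. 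The degenerate case $h$ a multiple of $p+1$ with the character trivial is handled by hand, as in Proposition \ref{specisind}: then $\chi_2^h$ factors through $\galp$ and $W_h$ is the sum of that character and its conjugate, which visibly matches $\Qpp \otimes_{\Qp}(\text{the corresponding }V_r(0))$.

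Alternatively, and perhaps more cleanly, I would simply match this up with Proposition \ref{specisind} directly: it suffices to identify $W_h$ with $V_r(s)$ for the appropriate $(r,s)$, namely $r \equiv h \bmod{p^2-1}$ and $s = h$. Indeed, the character $\chi_2^h$ equals $\omega_2(\cdot)^h \diam{\cdot}_2^h = \omega_2(\cdot)^r \diam{\cdot}_2^s$ on $\galpp$ with this choice, and then Proposition \ref{specisind} already tells us $\Qpp \otimes_{\Qp} V_r(s) = \indpp(\omega_2(\cdot)^r \diam{\cdot}_2^s) = \indpp(\chi_2^h)$; so one only needs that $W_h \cong V_r(s)$ as $\galp$-representations, which again follows from them having the same semisimple restriction to $\galpp$ together with irreducibility (or a direct check in the reducible case). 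The main obstacle is purely bookkeeping: being careful about the scalar extension $\Qpp \otimes_{\Qp} \Qpp \cong \Qpp \times \Qpp$ so that the two ``eigenlines'' are correctly identified, and making sure the cocycle relation in Proposition \ref{prt}(1) is used correctly to see that $g \mapsto$ (action on $W_h$) is a genuine representation of $\galp$. No deep input is needed beyond Proposition \ref{prt} and the elementary representation theory already used for Proposition \ref{specisind}.
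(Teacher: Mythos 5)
Your argument is essentially the paper's own proof: decompose the restriction of $\Qpp \otimes_{\Qp} W_h$ to $\galpp$ as $\chi_2^h \oplus \sigma(\chi_2)^h$, then conclude by Mackey irreducibility of $\indpp(\chi_2^h)$ together with Frobenius reciprocity, treating the degenerate case by hand. The only point to tidy is that the degenerate case is exactly $h=0$ and not a congruence mod $p+1$: for every $h \neq 0$ the characters $\chi_2^h$ and $\sigma(\chi_2)^h$ are automatically distinct (e.g.\ their Hodge--Tate weights at the two embeddings differ), which is what the paper uses.
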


\begin{proof}
The lemma is immediate if $h=0$, so let us assume that $h \neq 0$. The restriction of $W_h$ to $\galpp$ contains the characters $\chi_2^h$ and $\sigma(\chi_2)^h$ and since $\chi_2^h \neq \sigma(\chi_2)^h$, the induced representation $\indpp(\chi_2^h)$ is irreducible and the lemma follows from Frobenius reciprocity.
\end{proof}

Note that in the definition $W_h = \Qpp \cdot t_2^h$, the action of $\galp$ on $\Qpp$ is semilinear, while in lemma \ref{whisind} above we extend scalars to get $\Qpp \otimes_{\Qp} W_h$ but there the action of $\galp$ on $\Qpp$ is linear. The following result is well known, see for instance proposition 5.16 of \cite{F4} and the remark which follows.

\begin{prop}\label{indiscris}
If $h \in \ZZ$, then $W_h$ is a crystalline representation of $\galp$ and if $h \leq -1$, then $\dcris(W_h) = \Qp \cdot e \oplus \Qp \cdot f$ where
\[ \Mat(\phi) = \pmat{ 0 & 1 \\ p^{-h} & 0} \quad\text{and}\quad \Fil^i \dcris(W_h) = \begin{cases} \dcris(W_h) & \text{if $i \leq 0$,} \\ \Qp \cdot e & \text{if $1 \leq i \leq -h$,} \\
\{0\} & \text{if $1-h \leq i$}. \end{cases} \]  
\end{prop}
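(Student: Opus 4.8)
The goal is to prove Proposition \ref{indiscris}: for $h \in \ZZ$, $W_h = \Qpp \cdot t_2^h$ is crystalline, and for $h \leq -1$ we have the explicit description of $\dcris(W_h)$ with the given matrix of $\phi$ and filtration.

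First, I would establish that $W_h$ is crystalline. The element $t_2 \in \bcris^+$ is invertible in $\bcris$ (being a uniformizer-type element with $\phi^2(t_2) = pt_2$, analogous to the cyclotomic $t$), so $t_2^h \in \bcris$ for all $h \in \ZZ$. Since $W_h = \Qpp \cdot t_2^h \subset \bcris$ with a Galois-stable structure (by Proposition \ref{prt}(1), $g(t_2^h) = \chi_2(g)^h t_2^h$, and $\galp$ acts on $\Qpp$ semilinearly, preserving the line), $W_h$ is a sub-$\bcris$-... more precisely, $W_h$ is a $\Qp$-subrepresentation of $\bcris$, hence crystalline. One should note $\dim_{\Qp} W_h = 2$ since $\dim_{\Qp} \Qpp = 2$.

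Now for the explicit computation when $h \leq -1$. I would compute $\dcris(W_h) = (\bcris \otimes_{\Qp} W_h)^{\galp}$. Since $W_h \subset \bcris$, we have $\bcris \otimes_{\Qp} W_h = \bcris \otimes_{\Qp} (\Qpp \cdot t_2^h)$. Extending scalars, $\Qpp \otimes_{\Qp} \bcris \cong \bcris \times \bcris$ via $(a \otimes b) \mapsto (ab, \sigma(a)b)$ where $\sigma$ is the Frobenius on $\Qpp$; under this, the two factors correspond to the two embeddings, and $W_h$ becomes the span of $t_2^h$ and $\sigma(t_2)^h$ respectively. The $\galp$-invariants in the first factor are spanned by $\phi(t_2)^h \cdot t_2^h / (\text{something})$... actually the cleaner approach: I would look for $\galp$-invariant elements of the form $\lambda \cdot t_2^h$ with $\lambda \in \bcris$ satisfying $g(\lambda) = \chi_2(g)^{-h} \lambda$. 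Since $t_2$ itself satisfies $g(t_2) = \chi_2(g) t_2$ and $\phi(t_2)$ satisfies (by applying $g$ and using $\phi^2(t_2) = p t_2$ together with the twisted cocycle formula for $\chi_2$) the conjugate transformation law $g(\phi(t_2)) = \sigma(\chi_2(g)) \phi(t_2)$, both $t_2^{-h} \phi(t_2)^0$-type and $\phi(t_2)^{-h}$-type elements give invariants. Concretely, I expect $e = t_2^h \cdot \phi(t_2)^{-h} \cdot (\text{unit})$ and $f$ built from the other embedding, chosen so that $e, f$ form a $\Qp$-basis (not just $\Qpp$). Then $\phi(e)$ and $\phi(f)$ are computed using $\phi^2(t_2) = p t_2$ and $\phi(t_2) = \phi(t_2)$: one gets $\phi$ swapping the two "lines" up to the scalar $p^{-h}$, yielding $\Mat(\phi) = \smat{0 & 1 \\ p^{-h} & 0}$. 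For the filtration: $\dcris(W_h) \otimes \bdr = \bdr \otimes W_h$, and $\Fil^i$ is read off from $t_2 \in \Fil^1 \setminus \Fil^2$ and $\phi(t_2) \in \Fil^0 \setminus \Fil^1$ (Proposition \ref{prt}(3)); since $h \leq -1$, $t_2^h$ contributes negative filtration shift while $\phi(t_2)^h$ stays in $\Fil^0$, and tracking which basis vector sits in which filtration step gives exactly the stated jump: $\Fil^i = \dcris$ for $i \leq 0$, $= \Qp \cdot e$ for $1 \leq i \leq -h$, and $= 0$ for $i \geq 1-h$.

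The main obstacle, I expect, is the bookkeeping in passing between the semilinear $\galp$-action on $\Qpp \subset \bcris$ and the linear action after tensoring, and correctly identifying a $\Qp$-rational basis $(e,f)$ of $\dcris(W_h)$ — the individual invariant vectors naturally live over $\Qpp$, and one must descend. This is where the Frobenius-semilinearity and the relation $\phi^2(t_2) = p t_2$ interact: the fact that $\phi$ (not $\phi^2$) already moves $t_2$ to a genuinely different period $\phi(t_2)$ in the other embedding is what forces $\Mat(\phi)$ to be anti-diagonal rather than diagonal, and getting the off-diagonal entry to be exactly $p^{-h}$ (rather than $p^{-h}$ times a unit one has to normalize away) requires choosing $e$ and $f$ compatibly. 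Alternatively — and this may be the slickest route — since this is stated to be "well known," I would simply cite proposition 5.16 of \cite{F4} for the crystallinity and the shape of $\dcris$, and only verify the normalization of the matrix and filtration against the specific conventions (uniformizer $p$, the element $t_2$) used here, reducing the proof to a short consistency check rather than a from-scratch computation.
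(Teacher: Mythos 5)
Your proposal assembles the right raw ingredients (the transformation laws of $t_2$ and $\phi(t_2)$, the relation $\phi^2(t_2)=pt_2$, and reading the filtration off Proposition \ref{prt}(3)), and it correctly predicts where the anti-diagonal shape of $\Mat(\phi)$ comes from. But it stalls precisely at the step you yourself flag as ``the main obstacle'': producing a $\Qp$-rational basis $(e,f)$ of $\dcris(W_h)$ and normalizing the off-diagonal entry. That step is the actual content of the proof, and your sketch leaves it as ``I expect $e = t_2^h\cdot\phi(t_2)^{-h}\cdot(\text{unit})$\dots chosen so that $e,f$ form a $\Qp$-basis''. Moreover, the decomposition $\Qpp\otimes_{\Qp}\bcris\cong\bcris\times\bcris$ you invoke is \emph{not} $\galp$-stable factor by factor (the two factors are permuted by $g\notin\galpp$), so ``the invariants in the first factor'' is not a well-posed intermediate object; and the opening claim that a finite-dimensional Galois-stable subspace of $\bcris$ is automatically crystalline is asserted without justification and is not a standard quotable fact.

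The paper's proof dissolves the descent problem by dualizing: since $W_h^*=W_{-h}$, one has $\dcris(W_h)=\Hom_{\Qp}(W_{-h},\bcris)^{\galp}$, and for $h\leq -1$ the space $W_{-h}=\Qpp\cdot t_2^{-h}$ genuinely sits inside $\bcris$ (no negative powers of $t_2$ are needed). The inclusion $e:W_{-h}\hookrightarrow\bcris$ is then tautologically an invariant element, and $f=p^h\,\phi\circ e$, i.e.\ $\alpha\cdot t_2^{-h}\mapsto p^h\sigma(\alpha)\phi(t_2)^{-h}$, is a second one because $\phi$ commutes with the Galois action; they are $\Qp$-linearly independent since $e$ is $\Qpp$-linear while $f$ is $\sigma$-semilinear. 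Two invariants in a space of dimension at most two give crystallinity and the basis simultaneously, with no descent and no normalization ambiguity: $\phi(e)=p^{-h}f$ and $\phi(f)=p^h\phi^2\circ e=e$ by $\phi^2(t_2)=pt_2$, and the filtration is read off from $t_2^{-h}\in\Fil^{-h}\setminus\Fil^{1-h}$ and $\phi(t_2)^{-h}\in\Fil^0\setminus\Fil^1$. Your fallback of citing proposition 5.16 of \cite{F4} is consistent with the paper's own remark preceding the statement, but as written your direct computation is not a complete proof.
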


\begin{proof}
The dual of $\indpp(\chi_2^h)$ is naturally isomorphic to $\indpp(\chi_2^{-h})$ and hence $W_h^*=W_{-h}$ so that $\dcris(W_h) = \Hom(W_{-h},\bcris)^{\galp}$. The ``inclusion map'' $e : W_{-h} \to \bcris$ is one such element and the map $f =p^h \phi \circ e$, which is given by $f : \alpha \cdot t_2^{-h} \mapsto p^h \sigma(\alpha) \cdot \phi(t_2^{-h})$ is another one which is linearly independent. The fact that $\phi^2(t_2)=pt_2$ gives us the matrix of $\phi$ in the basis $e,f$. The fact that $t_2,\phi(t_2) \in  \Fil^0 \bdr$ implies that $\Fil^0 \dcris(W_h) = \dcris(W_h)$, and the fact that $t_2 \in \Fil^1\setminus \Fil^2 \bdr$ implies that $\Fil^{-h} \dcris(W_h) = \Qp \cdot e$ and $\Fil^{1-h} \dcris(W_h) = \{0\}$.
\end{proof}

Note that there is a similar result if $h \geq 1$, but in that case the non-trivial line of the filtration is generated by $f$. 

\section{Interlude : some $p$-adic analysis}\label{psr}
Before moving on to the construction of $(\phi,\Gamma)$-modules, we prove a few simple results concerning functions belonging to the Robba ring $\calR_E$ where $E$ is a finite extension of $\Qp$. Recall that if $r<1$, then $\calR_E^{\dagger,r}$ is defined to be the set of functions $f(X) = \sum_{k \in \ZZ} a_k X^k$ with $a_k \in E$ which converge on the annulus $r \leq |X|_p < 1$ and that $\calR_E = \cup_{r < 1} \calR_E^{\dagger,r}$. This ring is endowed with a frobenius $\phi$ given by $\phi(f)(X)=f((1+X)^p-1)$, and with an action of $\Gamma=\Gal(\Qp(\mu_{p^\infty})/\Qp)$ given by $\gamma(f)(X)=f((1+X)^{\chi(\gamma)}-1)$.

The subset $\calE_E^{\dagger}$ of $\calR_E$ consisting of those functions $f(X)$ for which the sequence $\{a_k(f)\}$ is bounded is a subfield of $\calR_E$, and we write $\OO_{\calE_E}^\dagger$ for the subring of $\calE_E^\dagger$ consisting of those functions for which $|a_k(f)|_p \leq 1$ for all $k$'s. If $E=\Qp$ then we drop the subscript $E$ from the notation. These rings were studied by Lazard in \cite{L62} and more recently by Kedlaya in \cite{K5}.

Let $Q=\phi(X)/X$ and for $n \geq 1$, let $Q_n=\phi^{n-1}(Q)$ so that $Q_n$ is the minimal polynomial of $\zeta_{p^n}-1$. If $r<1$, we define $n(r)$ to be the smallest integer such that $|\zeta_{p^n}-1|_p \geq r$. We also let $t=\log(1+X)$ so that $\phi(t)=pt$ and $\gamma(t)=\chi(\gamma)t$, and recall that we have the Weierstrass product formula $t=X \cdot \prod_{n=1}^{\infty} Q_n/p$. Recall also that by an argument analogous to that of lemma I.3.2 of \cite{LB4}, we have the following result.

\begin{lemm}\label{gamstab}
Every principal ideal of $\calR_E^{\dagger,r}$ which is stable under $\Gamma$ is generated by an element of the form $\prod_{n \geq n(r)} (Q_n / p)^{a_n}$ for some $a_n$'s in $\ZZ_{\geq 0}$.
\end{lemm}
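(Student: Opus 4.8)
The plan is to adapt the argument of Lemma I.3.2 of \cite{LB4} to the Robba ring setting, working with a single generator of the given principal ideal. So suppose $I = f \cdot \calR_E^{\dagger,r}$ is stable under $\Gamma$; I want to show that, up to a unit, $f$ is a product of the $Q_n/p$ with $n \geq n(r)$. First I would recall the structure of $\calR_E^{\dagger,r}$ as an integral domain: its nonzero elements have well-defined divisors supported on the zeros in the annulus $r \leq |X|_p < 1$, and two elements generate the same ideal iff they have the same divisor (the ring of functions on a half-open annulus is a Bézout domain, so there is no subtlety about projectivity). The zeros of $f$ that matter are those lying in the open unit disk minus the closed disk of radius $r$; since $f$ converges on $r \leq |X|_p < 1$, each such zero has absolute value in $[r,1)$, and the only ``accumulation'' is toward the boundary $|X|_p = 1$, which is harmless because $f$ is a genuine analytic function (a $p$-adic Mittag-Leffler / Weierstrass factorization applies on each closed sub-annulus).

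Next I would use the $\Gamma$-stability. If $x_0$ is a zero of $f$ in the annulus, then for each $\gamma \in \Gamma$ the element $\gamma(f)$ lies in $I$, hence is divisible by $f$, so $\gamma(f)$ also vanishes at $x_0$; equivalently $f$ vanishes at $\gamma^{-1}(x_0) = (1+x_0)^{\chi(\gamma)^{-1}} - 1$. Thus the zero set of $f$ in the annulus is stable under the action $x \mapsto (1+x)^a - 1$ for all $a \in \Zp^\times$. Now I invoke the classification of such orbits: the orbit of $0$ is $\{0\}$ (which is not in the annulus when $r > 0$, and contributes at most a factor $X$ when $r$ is small, but $X$ is a unit in $\calR_E^{\dagger,r}$ anyway since $0$ is excluded — actually $|X|_p$ can be $<1$ on the annulus, so $X$ is \emph{not} a unit; however $X = t \cdot \prod_{n\geq 1}(p/Q_n)$ and $t$ is a unit times $\prod Q_n/p$, so a factor of $X$ is subsumed), and the orbit of $\zeta_{p^n}-1$ under $\{(1+x)^a-1 : a \in \Zp^\times\}$ is exactly the set of primitive $p^n$-th roots of unity minus $1$, which is precisely the zero set of $Q_n$. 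Any point $x$ with $0 < |x|_p$ and $1+x$ not a root of unity has infinite orbit accumulating only at the boundary, but then $f$ would have infinitely many zeros on some closed sub-annulus $r \leq |X|_p \leq \rho < 1$, contradicting analyticity. Hence every zero of $f$ in the annulus is of the form $\zeta_{p^n}-1$ with $n \geq n(r)$, and by $\Gamma$-stability the multiplicity is constant along each $Q_n$, say $a_n \geq 0$.

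The main obstacle, and the step I would spend the most care on, is making the previous paragraph rigorous: ruling out ``exotic'' $\Gamma$-stable zero loci. Concretely I need two facts: (i) on any closed sub-annulus an element of $\calR_E^{\dagger,r}$ has only finitely many zeros, which is the $p$-adic Weierstrass preparation/Lazard theory from \cite{L62}; and (ii) the only finite $\Zp^\times$-orbits (under $x \mapsto (1+x)^a-1$) inside the punctured open disk are the sets $\{\zeta - 1 : \zeta^{p^n} = 1, \zeta \neq \zeta^{1/p}\text{-power}\} = V(Q_n)$. For (ii) one observes that if the orbit of $x_0$ is finite then the subgroup $\{a \in \Zp^\times : (1+x_0)^a = 1+x_0\}$ is open, forcing $(1+x_0)$ to have finite order a power of $p$; conversely these orbits are finite and correspond to the $Q_n$. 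Then I conclude: $f$ and $\prod_{n \geq n(r)} (Q_n/p)^{a_n}$ have the same divisor on the annulus, hence differ by a unit of $\calR_E^{\dagger,r}$, which gives the claimed generator (after absorbing any power of $X$ via the Weierstrass product $t = X\prod_{n\geq 1} Q_n/p$ and the fact that $t$ — equivalently the truncated product $\prod_{n \geq n(r)} Q_n/p$ together with units — already appears on the list, so the normalization can always be arranged with $a_n \in \ZZ_{\geq 0}$). I would also note explicitly that we only claim existence of \emph{a} generator of that form, not uniqueness, and that this is exactly parallel to loc. cit. with $\calR_E^{\dagger,r}$ replacing the bounded-coefficient ring.
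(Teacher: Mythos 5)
Your argument is essentially the intended one: the paper gives no proof of this lemma but defers to Lemma I.3.2 of \cite{LB4}, and your divisor-plus-orbit analysis (Lazard's theory giving finitely many zeros on each closed sub-annulus and ``same divisor $\Rightarrow$ unit ratio'', $\Gamma$-stability of the divisor, and the observation that a finite $\Zp^\times$-orbit forces $1+x_0$ to be a $p$-power root of unity so that the orbit is exactly $V(Q_n)$) is precisely that argument transported to $\calR_E^{\dagger,r}$. One small correction: your second thought about $X$ is backwards --- since $r>0$ the point $0$ lies outside the annulus $r \leq |X|_p < 1$, the Laurent series $X^{-1}$ belongs to $\calR_E^{\dagger,r}$, and $X$ \emph{is} a unit there (being a unit has nothing to do with $|X|_p=1$); consequently the whole digression, including the attempted patch via $t = X\prod_{n\geq 1} Q_n/p$ (which would in any case not yield exponents in $\ZZ_{\geq 0}$), is unnecessary, because $0$ never occurs in the divisor of $f$ on the annulus. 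Everything else is fine.
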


We now prove a few results which are used in the remainder of the paper.

\begin{lemm}\label{invphi}
The map $f(X) \mapsto \phi^2(f(X))/f(X)$ from $1+X\Zp\dcroc{X}$ to itself is bijective.
\end{lemm}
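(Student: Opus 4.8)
The plan is to solve the functional equation $\phi^2(f)/f = g$ for $f \in 1+X\Zp\dcroc{X}$ given $g \in 1+X\Zp\dcroc{X}$, which shows surjectivity; injectivity will come out of the same argument by showing the solution is unique. Write $f = 1 + \sum_{k\geq 1} a_k X^k$ and compare with $g = 1 + \sum_{k\geq 1} b_k X^k$. The key observation is that $\phi^2(f)(X) = f((1+X)^{p^2}-1) = f(p^2 X + \cdots)$, so that $\phi^2(f) = 1 + \sum_{k\geq 1} a_k (p^2 X + \cdots)^k$. Hence when we expand $\phi^2(f) = f\cdot g$ and look at the coefficient of $X^n$, the left-hand side contributes $p^{2n} a_n$ plus terms involving only $a_1,\dots,a_{n-1}$ (coming from lower powers $a_k(p^2X+\cdots)^k$ with $k<n$, and from the $a_k$ with $k=n$ but picking up at least one higher-order term from $(1+X)^{p^2}-1$, which still lands in degree $>n$ — so actually only $a_n p^{2n}$ in degree exactly $n$ from the $k=n$ term). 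The right-hand side contributes $a_n + b_n$ (the $k=n$ term of $f$ times the constant term of $g$, plus the constant term of $f$ times $b_n$) plus terms involving $a_1,\dots,a_{n-1}$ and $b_1,\dots,b_n$.

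So the recursion has the shape $p^{2n} a_n = a_n + (\text{polynomial in } a_1,\dots,a_{n-1} \text{ and } b_1,\dots,b_n)$, i.e.
\[ (p^{2n}-1) a_n = P_n(a_1,\dots,a_{n-1};b_1,\dots,b_n), \]
where $P_n$ has $\Zp$-coefficients. Since $p^{2n}-1 \in \Zp^\times$ for all $n\geq 1$, this determines $a_n$ uniquely in $\Zp$ by induction on $n$, given the $b_i \in \Zp$. This simultaneously proves that a solution $f \in 1+X\Zp\dcroc{X}$ exists and is unique, hence the map is bijective.

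The main point to get right — and the only place a little care is needed — is the precise claim that in degree $n$ the left-hand side $\phi^2(f)$ contributes exactly $p^{2n}a_n$ as its $a_n$-term, with no other contribution of $a_n$. This follows because $(1+X)^{p^2}-1 = p^2 X + \binom{p^2}{2}X^2 + \cdots$ has $X$-adic valuation exactly $1$ with leading coefficient $p^2$, so $((1+X)^{p^2}-1)^k$ has lowest-degree term $p^{2k}X^k$ and all other terms in degree $>k$; thus $a_k\cdot((1+X)^{p^2}-1)^k$ can only affect degree $n$ for $k\leq n$, contributing $a_n p^{2n}$ when $k=n$ and something in terms of $a_1,\dots,a_{n-1}$ when $k<n$. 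On the right-hand side the $a_n$-term in degree $n$ is $a_n\cdot b_0 = a_n$ (since $g$ has constant term $1$), and moving it to the left produces the factor $p^{2n}-1$. Everything else is a routine induction, and since $p$ is odd (or even $p=2$: $2^{2n}-1$ is still a unit in $\Zp$) the units $p^{2n}-1$ cause no trouble.
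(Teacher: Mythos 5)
Your proof is correct, but it takes a genuinely different route from the paper. You solve the functional equation $\phi^2(f)=fg$ coefficient by coefficient: since $(1+X)^{p^2}-1=p^2X+\cdots$, the degree-$n$ coefficient of $\phi^2(f)$ is $p^{2n}a_n$ plus a $\Zp$-linear expression in $a_1,\dots,a_{n-1}$, which yields the recursion $(p^{2n}-1)a_n=P_n(a_1,\dots,a_{n-1};b_1,\dots,b_n)$, and $p^{2n}-1\equiv -1 \bmod p$ is a unit in $\Zp$, so existence and uniqueness of $f$ come out simultaneously. The paper instead checks injectivity by comparing the leading coefficients of $f$ and $\phi^2(f)$ (essentially your recursion specialized to $g=1$) and gets surjectivity from the telescoping product $\prod_{n\geq 0}\phi^{2n}(\cdot)$, which converges because $\phi^{2n}(f)\to 1$. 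The product formula is slicker and, more importantly, gives a closed-form inverse that the paper exploits immediately afterwards (corollary \ref{phiforgam}) to see that the inverse map carries $1+pX\Zp\dcroc{X}$ into itself; your recursion yields the same refinement, but only after the extra observation that if all $b_n\in p\Zp$ then inductively all $a_n\in p\Zp$, since $P_n$ is a polynomial with $\Zp$-coefficients and no constant term in the variables $a_i,b_j$. Both arguments are complete proofs of the lemma as stated.
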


\begin{proof}
The map is injective because if $f(X)=1+a_k X^k + \OO(X^{k+1})$ with $a_k \neq 0$, then $\phi^2(f(X))= 1+p^{2k} a_k X^k + \OO(X^{k+1})$ so that $\phi^2(f(X))=f(X)$ if and only if $f(X)=1$. Let us now prove surjectivity. If $f(X) \in 1+X \Zp\dcroc{X}$, then $\phi^n(f(X)) \to 1$ as $n \to \infty$ and the product $\prod_{n=0}^{\infty} \phi^{2n}(f(X))$ converges to $g(X) \in 1+X \Zp\dcroc{X}$ such that $\phi^2(g(X))/g(X)=f(X)$.
\end{proof}

\begin{coro}\label{phiforgam}
If $\gamma \in \Gamma$, then there exist two uniquely determined power series $f_\gamma(X) \in 1+X\Zp\dcroc{X}$ and $g_\gamma(X) \in 1+pX\Zp\dcroc{X}$ such that 
\[ \frac{\phi^2(f_\gamma(X))}{f_\gamma(X)}=\frac{\gamma(Q_2)}{Q_2} \quad\text{and}\quad  \frac{\phi^2(g_\gamma(X))}{g_\gamma(X)}=\frac{\gamma(Q_2/Q^p)}{Q_2/Q^p}. \]
\end{coro}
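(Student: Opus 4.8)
The plan is to reduce the corollary to lemma \ref{invphi}. It suffices to prove that $\gamma(Q_2)/Q_2$ lies in $1+X\Zp\dcroc{X}$ and that $\gamma(Q_2/Q^p)/(Q_2/Q^p)$ lies in $1+pX\Zp\dcroc{X}$; lemma \ref{invphi} will then produce the unique $f_\gamma$ and $g_\gamma$ solving the two equations (the refinement that $g_\gamma$ actually lands in $1+pX\Zp\dcroc{X}$ I address at the end). Note that $Q_2/Q^p$ is \emph{not} itself an element of $\Zp\dcroc{X}$ — a short computation gives it constant term $p^{1-p}$ — so lemma \ref{invphi} cannot be applied to it directly; one must first simplify the ratio $\gamma(Q_2/Q^p)/(Q_2/Q^p)$ and only afterwards recognize it as an integral, indeed highly congruent, power series.

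First I would show that $\gamma(Q_n)=h_{\gamma,n}\,Q_n$ for some $h_{\gamma,n}\in 1+X\Zp\dcroc{X}$, for $n\in\{1,2\}$. Since $Q_n$ is the minimal polynomial of $\zeta_{p^n}-1$ it is a distinguished polynomial of degree $p^{n-1}(p-1)$, so I may perform Weierstrass division in $\Zp\dcroc{X}$: write $\gamma(Q_n)=Q_n h_{\gamma,n}+r_{\gamma,n}$ with $r_{\gamma,n}$ a polynomial of degree $<p^{n-1}(p-1)$. Because $\gamma$ acts by $1+X\mapsto(1+X)^{\chi(\gamma)}$ and $\chi(\gamma)\in\Zp^\times$, it permutes the roots $\zeta_{p^n}-1$ of $Q_n$, so $\gamma(Q_n)$ vanishes at all $p^{n-1}(p-1)$ of them; evaluating the division identity there forces $r_{\gamma,n}$ to vanish at all of them, hence $r_{\gamma,n}=0$. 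Thus $h_{\gamma,n}=\gamma(Q_n)/Q_n\in\Zp\dcroc{X}$, and since $Q_n(0)=p$ and $\gamma$ fixes $0$ its constant term is $1$, so $h_{\gamma,n}\in 1+X\Zp\dcroc{X}$.

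Since $\gamma$ is a ring homomorphism one gets $\gamma(Q_2/Q^p)/(Q_2/Q^p)=h_{\gamma,2}/h_{\gamma,1}^{\,p}$, which already lies in $1+X\Zp\dcroc{X}$. To promote this to $1+pX\Zp\dcroc{X}$ I use the congruences $Q\equiv X^{p-1}$ and $Q_2\equiv X^{p(p-1)}\bmod p$, which give $Q_2\equiv Q^p\bmod p$ and hence $\gamma(Q_2)\equiv\gamma(Q)^p\bmod p$; reducing $h_{\gamma,2}Q_2=\gamma(Q_2)$ and $h_{\gamma,1}^{\,p}Q^p=\gamma(Q)^p$ modulo $p$ and cancelling $Q^p\not\equiv 0$ in the domain $\Fp\dcroc{X}$ yields $h_{\gamma,2}\equiv h_{\gamma,1}^{\,p}\bmod p$, so the ratio lies in $1+p\Zp\dcroc{X}$ with constant term $1$, i.e. in $1+pX\Zp\dcroc{X}$. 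Lemma \ref{invphi} now gives unique $f_\gamma,g_\gamma\in 1+X\Zp\dcroc{X}$ with $\phi^2(f_\gamma)/f_\gamma=h_{\gamma,2}$ and $\phi^2(g_\gamma)/g_\gamma=h_{\gamma,2}/h_{\gamma,1}^{\,p}$, and since $g_\gamma=\prod_{n\geq 0}\phi^{2n}(h_{\gamma,2}/h_{\gamma,1}^{\,p})$ by the explicit inverse exhibited in the proof of that lemma, while $\phi$ preserves $1+pX\Zp\dcroc{X}$ (as $\phi(X)\in X\Zp\dcroc{X}$) and this set is closed under the relevant products and $X$-adic limits, in fact $g_\gamma\in 1+pX\Zp\dcroc{X}$. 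The only points needing care are these two integrality claims — that $\gamma(Q_2)/Q_2$ is genuinely in $\Zp\dcroc{X}$ rather than merely in a localization of the Robba ring (handled by Weierstrass division) and that $g_\gamma$ descends to the smaller space $1+pX\Zp\dcroc{X}$ (handled by the congruence $Q_2\equiv Q^p\bmod p$); everything else is bookkeeping.
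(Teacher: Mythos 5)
Your proof is correct and takes essentially the same route as the paper: reduce to lemma \ref{invphi} after verifying that $\gamma(Q_2)/Q_2 \in 1+X\Zp\dcroc{X}$ and $\gamma(Q_2/Q^p)/(Q_2/Q^p) \in 1+pX\Zp\dcroc{X}$, and read off the extra integrality of $g_\gamma$ from the explicit product inverting $\phi^2(\cdot)/(\cdot)$. The only (harmless) divergence is in how the second containment is checked: the paper intersects $1+X\Zp\dcroc{X}$ with $1+p\OO_{\calE}^\dagger$ using $Q_2/Q^p \in 1+p\OO_{\calE}^\dagger$, whereas you prove $h_{\gamma,2}\equiv h_{\gamma,1}^{\,p}\bmod{p}$ directly in $\Zp\dcroc{X}$ from $Q_2\equiv Q^p\equiv X^{p(p-1)}\bmod{p}$, supplying along the way the Weierstrass-division argument for $\gamma(Q_n)/Q_n\in 1+X\Zp\dcroc{X}$ that the paper takes for granted.
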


\begin{proof}
We have $\gamma(Q_2)/Q_2 \in 1+X\Zp\dcroc{X}$ and $Q_2/Q^p \in 1 + p \OO_{\calE}^\dagger$ so that 
\[ \frac{\gamma(Q_2/Q^p)}{Q_2/Q^p} \in  1+X\Zp\dcroc{X} \cap 1 + p \OO_{\calE}^\dagger = 1+pX\Zp\dcroc{X}. \] 
The corollary then follows from lemma \ref{invphi}, and the fact that $g_\gamma(X) \in 1+pX\Zp\dcroc{X}$ follows from the explicit construction for the inverse of $\phi^2(\cdot)/(\cdot)$.
\end{proof}

Note that since both $f_\gamma(X)$ and $g_\gamma(X)$ are uniquely determined, we have $f_{\gamma \eta}(X) = f_\gamma(X) \gamma(f_\eta(X))$ and $g_{\gamma \eta}(X) = g_\gamma(X) \gamma(g_\eta(X))$ if $\gamma, \eta \in \Gamma$.

\begin{lemm}\label{phip}
We have $(t^{-1} \calR_E)^{\phi^2=p^{-2}} = E \cdot t^{-1}$.
\end{lemm}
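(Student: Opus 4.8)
The plan is to analyze an element $x \in t^{-1}\calR_E$ satisfying $\phi^2(x) = p^{-2}x$ by multiplying through by $t$ and reducing to a statement about $\calR_E$ itself. First I would set $y = tx \in \calR_E$; since $\phi^2(t) = p^2 t$, the equation $\phi^2(x) = p^{-2}x$ becomes $\phi^2(y) = y$. So it suffices to show that $\calR_E^{\phi^2=1} = E \cdot t^{-1} \cdot t = E$, i.e. that the only elements of $\calR_E$ fixed by $\phi^2$ are the constants; the lemma then follows by dividing back by $t$.

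To prove $\calR_E^{\phi^2=1} = E$, I would argue as follows. Suppose $y \in \calR_E$ with $\phi^2(y) = y$, say $y \in \calR_E^{\dagger,r}$ for some $r < 1$. Applying $\phi^2$ repeatedly and using that $\phi$ improves the radius of convergence, we see that $y \in \calR_E^{\dagger,r'}$ for every $r' > 0$, hence $y$ in fact extends to a bounded holomorphic function on the full punctured open unit disk $0 < |X|_p < 1$ (bounded because on each annulus it agrees with $\phi^{2n}(y)$ pulled back, and the sup norm is controlled — alternatively, one notes that a $\phi^2$-fixed element has bounded coefficients, so $y \in \calE_E^\dagger \cap \calR_E^{\dagger, 0^+}$). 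Writing $y = \sum_{k \in \ZZ} a_k X^k$, the functional equation forces a relation between the coefficients: $\phi^2$ sends $X^k$ to $((1+X)^{p^2}-1)^k$, whose lowest-order term is $p^{2k}X^k$ for $k \geq 0$, and for $k < 0$ one has $X^k \mapsto X^k \cdot (\text{unit})$ with leading coefficient $p^{2k}$. Comparing leading terms (or the Newton polygon) of $\phi^2(y) = y$ shows that $a_k = 0$ for all $k \neq 0$, leaving $y = a_0 \in E$.

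I expect the main obstacle to be making the convergence/boundedness step rigorous: one must be careful that an element of $\calR_E$, a priori only defined on an annulus $r < |X|_p < 1$, genuinely extends across $|X|_p = r$ and down toward $0$ once it is $\phi^2$-fixed, and that the comparison of coefficients is legitimate. The cleanest route is probably to observe directly that $\phi^2(y) = y$ forces the coefficients of $y$ to be bounded (so $y \in \calE_E^\dagger$, where Laurent-series coefficient comparison is unproblematic) and then run the leading-term argument there; once $y$ is known to be a constant the membership in $t^{-1}\calR_E$ is automatic. An alternative, perhaps shorter, argument: if $y \neq 0$ is $\phi^2$-fixed in $\calR_E$, then the principal ideal $y\calR_E^{\dagger,r}$ is $\phi^2$-stable, and combining this with Lemma \ref{gamstab}-type reasoning on the structure of $\phi$-stable ideals (together with $\phi^2(t) = p^2 t$, so $t\calR_E$ is the relevant building block) pins down $y$ up to a power of $t$; the exponent is then forced to be $-1$ by the original equation. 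I would present the direct coefficient argument as the main proof since it is the most self-contained.
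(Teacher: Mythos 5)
Your opening reduction is exactly the paper's proof: setting $y=tx$ and using $\phi^2(t)=p^2t$ turns the equation into $\phi^2(y)=y$, and the paper stops there, quoting $\calR_E^{\phi^2=1}=E$ as a known fact. So up to that point you are in complete agreement with the text. The difficulty is that you then try to prove $\calR_E^{\phi^2=1}=E$, and the argument you give for it does not work as stated.

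The specific problem is the direction in which $\phi$ moves annuli. Since $|(1+X)^p-1|_p=|X|_p^p$ for $|X|_p$ close to $1$, the map $\phi$ sends $\calR_E^{\dagger,r}$ into $\calR_E^{\dagger,r^{1/p}}$, i.e.\ the composite is defined on a \emph{thinner} annulus near the boundary $|X|_p=1$, not a larger one. Iterating $y=\phi^2(y)$ therefore gives no extension of $y$ toward $X=0$: to evaluate $\phi^2(y)$ at a point of small radius you would already need $y$ at the even smaller radius $|X|_p^{p^2}$, so the "extends to the punctured disk, hence has a Laurent expansion with a leading term" step is circular. A general element of $\calR_E$ has no expansion at $0$ at all, so the leading-term comparison is simply unavailable until you know $y$ is bounded — and that boundedness, which you correctly identify as the main obstacle, is exactly the content of the statement and is only asserted in your proposal. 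A correct route is: for $s$ close to $1$ the sup norm on the circle $|X|_p=s$ satisfies $|\phi^2(y)|_s=|y|_{s^{p^2}}$, so $\phi^2(y)=y$ forces $|y|_{s^{1/p^{2n}}}=|y|_s$ for all $n$; the piecewise-linear convexity of $s\mapsto \log|y|_s$ then forces slope $0$ near the boundary, i.e.\ $y\in\calE_E^\dagger$ (and, after scaling, $y\in\OO_{\calE_E}^\dagger$). One then reduces modulo the maximal ideal to the Laurent series field over the residue field of $E$, where the equation becomes $\bar y(X)=\bar y(X^{p^2})$ and comparison of $X$-adic valuations forces $\bar y$ to be a constant; subtracting a constant and iterating (successive approximation) gives $y\in E$. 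Finally, your proposed "alternative" via Lemma \ref{gamstab} does not apply: that lemma classifies $\Gamma$-stable ideals, whereas here you would need a structure result for $\phi$-stable ideals, which is a different (and not established) statement.
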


\begin{proof}
If $t^{-1} f(X) \in (t^{-1} \calR_E)^{\phi^2=p^{-2}}$ then $f(X) \in \calR_E^{\phi^2=1} = E$.
\end{proof}

Let $\partial : \calR_E \to \calR_E$ be the operator defined by $\partial f(X)=(1+X)df(X)/dX$. If $f \in \operatorname{Frac} \calR_E^{\dagger,r}$ and if $n \geq n(r)$ and if $f$ has at most a simple pole at $\zeta_{p^n}-1$, we define $\res_n(f)$ to be the value at $\zeta_{p^n}-1$ of $f \cdot Q_n / \partial Q_n$ so that $\res_n(f)$ is the residue of $f$ at $\zeta_{p^n}-1$ multiplied by a suitable constant such that $\res_n(\partial Q_n/Q_n)=1$.

\begin{lemm}\label{resinz}
If $f \in \calR_E^{\dagger,r}$ and $n \geq n(r)$, then $\res_n(\partial f/f) \in \ZZ$.
\end{lemm}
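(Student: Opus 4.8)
The plan is to reduce to the fact that $\res_n$ detects the order of vanishing (or the order of the pole) of $f$ at $\zeta_{p^n}-1$, which is an integer. First I would fix $n \geq n(r)$ and write $z = \zeta_{p^n}-1$, a zero of $Q_n$ in the annulus where $f$ converges. Since $f \in \calR_E^{\dagger,r}$, it is analytic on the annulus $r \leq |X|_p < 1$ and in particular in a neighbourhood of $z$; let $m = \operatorname{ord}_z(f) \in \ZZ$ be the order of vanishing of $f$ at $z$ (which is $\geq 0$ here since $f$ has no poles, but the argument works for $f \in \operatorname{Frac}\calR_E^{\dagger,r}$ as well, giving $m \in \ZZ$). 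Then locally near $z$ we can write $f = (X-z)^m \cdot u$ with $u$ analytic and nonvanishing at $z$.

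Next I would compute $\partial f / f$ near $z$. Since $\partial$ is a derivation, $\partial f / f = m \cdot \partial(X-z)/(X-z) + \partial u / u$. The second term $\partial u/u$ is analytic at $z$ because $u$ is a unit there, so it contributes nothing to $\res_n$. For the first term, $\partial(X-z) = (1+X)d(X-z)/dX = 1+X$, which takes the value $1+z = \zeta_{p^n} \neq 0$ at $z$; so $\partial f/f$ has at most a simple pole at $z$, with residue $m$, and more precisely $\res_n(\partial f/f) = \res_n\bigl(m\,(1+X)/(X-z)\bigr)$. Now I would just unwind the definition of $\res_n$: since $Q_n$ has a simple zero at $z$, near $z$ we have $Q_n = (X-z)\cdot v$ with $v$ analytic and $v(z) = \partial Q_n(z)\cdot (1+z)^{-1}$ — more carefully, $\partial Q_n = (1+X)Q_n'$, so evaluating $Q_n/\partial Q_n = Q_n/((1+X)Q_n')$ and multiplying by the simple pole $m(1+X)/(X-z)$ and taking the value at $z$, the factors $(1+X)$ and $(X-z)$ cancel against $Q_n'(z) = v(z)$ and one is left with exactly $m$. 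Hence $\res_n(\partial f/f) = m = \operatorname{ord}_z(f) \in \ZZ$.

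There is essentially no hard part here; the only thing to be careful about is making sure the normalizing factor $Q_n/\partial Q_n$ in the definition of $\res_n$ interacts correctly with the logarithmic derivative, i.e.\ that the constant which makes $\res_n(\partial Q_n/Q_n)=1$ is precisely the one that makes $\res_n(\partial f/f)$ come out to the integer $\operatorname{ord}_z(f)$ rather than that integer times some unit. This follows because $\partial Q_n/Q_n = \partial(X-z)/(X-z) + \partial v/v$ by the same computation as above, so $\res_n(\partial Q_n/Q_n)$ and $\res_n(\partial f/f)$ are computed by the identical local recipe applied to $(X-z)$-th powers, and the normalization forces the former to be $1$ and hence the latter to be $m$. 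One could alternatively phrase the whole argument as: $\res_n$ followed by $f \mapsto \partial f/f$ is the composite of the usual residue-at-$z$ map with $f \mapsto df/f$, up to the chosen normalization, and the residue of a logarithmic derivative of a meromorphic function is its order of vanishing, an integer.
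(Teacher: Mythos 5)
Your proof is correct and follows essentially the same route as the paper: factor out the order of vanishing $m$ of $f$ at $\zeta_{p^n}-1$, use additivity of the logarithmic derivative, observe that the unit part contributes a regular term, and conclude $\res_n(\partial f/f)=m\in\ZZ$. The only cosmetic difference is that the paper factors $f=Q_n^a g$ directly, which makes the normalization $\res_n(\partial Q_n/Q_n)=1$ apply immediately, whereas you factor locally as $(X-z)^m u$ and then verify the normalization by hand.
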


\begin{proof}
If $g(X) \in \calR_E^{\dagger,r}$ is nonzero at $\zeta_{p^n}-1$ then $\partial g/g$ has no pole at $\zeta_{p^n}-1$ and hence $\res_n(\partial g/g)=0$. If $f(X) \in \calR_E^{\dagger,r}$ then we can write $f(X)=Q_n(X)^a g(X)$ where $g(X) \in \calR_E^{\dagger,r}$ is nonzero at $\zeta_{p^n}-1$ and $a$ is the order of vanishing of $f(X)$ at $\zeta_{p^n}-1$ so that $\res_n(f)=a \res_n(Q_n) + \res_n(g) = a \in\ZZ$.
\end{proof}

\section{A family of dihedral $(\varphi,\Gamma)$-modules}
We now construct the $(\phi,\Gamma)$-modules over $\calR$ associated to the representations $V_r(s)$. Since both $Q_2/Q^p$ and $g_\gamma(X)$ belong to $1 + p \OO_{\calE}^\dagger$, we have $(Q_2/Q^p)^u \in 1+ p \OO_{\calE}^\dagger$ and $g_\gamma(X)^u \in 1+ p \OO_{\calE}^\dagger$ if $u \in \Zp$. 

\begin{defi}\label{defpgm}
Given $j \in \ZZ$, we define a $(\phi,\Gamma)$-module $\dfont^0_j(u)$ over $\OO_{\calE}^\dagger$ by $\dfont^0_j = \OO_{\calE}^\dagger \cdot e \oplus \OO_{\calE}^\dagger \cdot f$ where
\[ \Mat(\phi) = \pmat{0 & 1 \\ Q_2^j (Q_2/Q^p)^u & 0} \] 
and
\[ \Mat(\gamma) = \pmat{\chi(\gamma)^j \diam{\chi(\gamma)}^u \phi(f_\gamma(X)^j g_\gamma(X)^u) & 0 \\ 0 & \chi(\gamma)^j \diam{\chi(\gamma)}^u f_\gamma(X)^j g_\gamma(X)^u}. \]
We then extend scalars to $\calR$ so as to get an \'etale $(\phi,\Gamma)$-module $\dfont_j(u) = \calR \otimes_{\OO_{\calE}^\dagger} \dfont_j^0(u)$.
\end{defi}

\begin{lemm}\label{pgmcong}
If $u_1=u_2\bmod{p^k}$, then $\dfont^0_j(u_1)=\dfont^0_j(u_2)\bmod{p^{k+1}}$.
\end{lemm}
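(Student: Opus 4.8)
The plan is to show that the defining data for $\dfont^0_j(u_1)$ and $\dfont^0_j(u_2)$ agree modulo $p^{k+1}$ term by term, which immediately gives the congruence of the modules since $\dfont^0_j(u)$ is a free $\OO_{\calE}^\dagger$-module of rank $2$ with prescribed matrices for $\phi$ and for the $\gamma \in \Gamma$. First I would record the elementary fact that if $a_1 = a_2 \bmod{p^k}$ with $a_1,a_2 \in \Zp$ and $b \in 1+p\OO_{\calE}^\dagger$, then $b^{a_1} = b^{a_2} \bmod{p^{k+1}}$: writing $b = 1+pc$ with $c \in \OO_{\calE}^\dagger$, the binomial expansion $b^{a} = \sum_{n\geq 0} \binom{a}{n} p^n c^n$ shows that $b^{a_1}-b^{a_2} = \sum_{n \geq 1} \left(\binom{a_1}{n}-\binom{a_2}{n}\right) p^n c^n$, and for $n \geq 1$ one has $\binom{a_1}{n} = \binom{a_2}{n} \bmod{p^k}$ (since $a \mapsto \binom{a}{n}$ is a polynomial with $\Zp$-coefficients in $a$, hence continuous, and in fact $p^k \mid a_1-a_2$ divides the difference), so each term is divisible by $p^{k+1}$. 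This is the same mechanism already used in the proof of lemma \ref{repcong}.

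Next I would apply this to each place where $u$ enters definition \ref{defpgm}. In $\Mat(\phi)$, only the entry $Q_2^j(Q_2/Q^p)^u$ depends on $u$, and $Q_2/Q^p \in 1+p\OO_{\calE}^\dagger$, so $(Q_2/Q^p)^{u_1} = (Q_2/Q^p)^{u_2} \bmod{p^{k+1}}$ and hence the two matrices of $\phi$ are congruent mod $p^{k+1}$. In $\Mat(\gamma)$ the $u$-dependence appears through $\diam{\chi(\gamma)}^u$, through $g_\gamma(X)^u$, and through $\phi(g_\gamma(X)^u)$. Here $\diam{\chi(\gamma)} \in 1+p\Zp$ and $g_\gamma(X) \in 1+pX\Zp\dcroc{X} \subseteq 1+p\OO_{\calE}^\dagger$ by corollary \ref{phiforgam}, so again $\diam{\chi(\gamma)}^{u_1} = \diam{\chi(\gamma)}^{u_2} \bmod{p^{k+1}}$ and $g_\gamma(X)^{u_1} = g_\gamma(X)^{u_2} \bmod{p^{k+1}}$; applying $\phi$ (which preserves $\OO_{\calE}^\dagger$ and the ideal $p\OO_{\calE}^\dagger$) preserves this last congruence. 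The factors $\chi(\gamma)^j$ and $f_\gamma(X)^j$ do not involve $u$ and are unchanged. Multiplying congruent elements by fixed elements of $\OO_{\calE}^\dagger$ preserves congruence mod $p^{k+1}$, so each diagonal entry of $\Mat(\gamma)$ agrees mod $p^{k+1}$ for $u=u_1$ and $u=u_2$.

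Finally I would conclude: since $\OO_{\calE}^\dagger$ modulo $p^{k+1}$ depends only on the reductions of all structure maps mod $p^{k+1}$, and we have just checked that $\Mat(\phi)$ and every $\Mat(\gamma)$ for $\dfont^0_j(u_1)$ reduce mod $p^{k+1}$ to those for $\dfont^0_j(u_2)$, the identity map on basis vectors $e,f$ induces an isomorphism $\dfont^0_j(u_1)/p^{k+1} \cong \dfont^0_j(u_2)/p^{k+1}$ of $(\phi,\Gamma)$-modules over $\OO_{\calE}^\dagger/p^{k+1}$, which is exactly the assertion $\dfont^0_j(u_1)=\dfont^0_j(u_2)\bmod{p^{k+1}}$. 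The only point requiring any care — and hence the main (mild) obstacle — is the binomial estimate that upgrades a congruence mod $p^k$ in the exponent to a congruence mod $p^{k+1}$ in the power, which gains one power of $p$ precisely because the base lies in $1+p\OO_{\calE}^\dagger$; everything else is bookkeeping. One should also note for cleanliness that the continuity of the action of $\Gamma$ is unaffected since only finitely many congruence conditions are imposed and the $f_\gamma,g_\gamma$ are the uniquely determined series of corollary \ref{phiforgam}, independent of $u$.
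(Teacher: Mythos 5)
Your proposal is correct and follows the same route as the paper: the paper's own proof is precisely the observation that $Q_2/Q^p$ and $g_\gamma(X)$ lie in $1+p\OO_{\calE}^\dagger$, so that exponents congruent mod $p^k$ give powers congruent mod $p^{k+1}$; you simply spell out the entry-by-entry bookkeeping (including the factors $\diam{\chi(\gamma)}^u$ and $\phi(g_\gamma(X)^u)$, which the paper leaves implicit).

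One caveat on your justification of the key elementary congruence. The claim that $\binom{a_1}{n}\equiv\binom{a_2}{n}\bmod{p^k}$ for all $n\geq 1$ whenever $a_1\equiv a_2\bmod{p^k}$ is false in general --- for instance $\binom{p^k}{p}-\binom{0}{p}$ has $p$-adic valuation $k-1$ --- and $x\mapsto\binom{x}{n}$ is not a polynomial with $\Zp$-coefficients (it is only $\Zp$-valued, being $1/n!$ times such a polynomial). The congruence you actually need is nevertheless true: writing $b^{a_1}-b^{a_2}=b^{a_2}\bigl((1+pc)^{a_1-a_2}-1\bigr)$ with $a_1-a_2=p^km$, the identity $\binom{p^km}{n}=\frac{p^km}{n}\binom{p^km-1}{n-1}$ shows that each term $\binom{p^km}{n}p^nc^n$ with $n\geq 1$ has valuation at least $k-v_p(n)+n\geq k+1$, whence $b^{a_1}\equiv b^{a_2}\bmod{p^{k+1}}$. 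With that repair the rest of your argument is complete and matches the paper's.
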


\begin{proof}
This follows from the definition above and the fact that both $Q_2/Q^p$ and $g_\gamma(X)$ belong to $1 + p \OO_{\calE}^\dagger$ so that if $u_1=u_2\bmod{p^k}$, then $(Q_2/Q^p)^{u_1}=(Q_2/Q^p)^{u_2}\bmod{p^{k+1}}$ and $g_\gamma(X)^{u_1}=g_\gamma(X)^{u_2}\bmod{p^{k+1}}$.
\end{proof}

\begin{theo}\label{pgmisind}
We have $\dfont_j(u) = \dfont(V_j(j+(p+1)u))$ if $u \in \Zp$.
\end{theo}

\begin{proof}
By lemmas \ref{repcong} and \ref{pgmcong}, it is enough to check the isomorphism for $u$ belonging to a $p$-adically dense subset of $\Zp$, and we do so for those $u \in (p-1)\ZZ$ such that $h=j+(p+1)u \leq -1$, so that $V_j(j+(p+1)u) = V_h(h) = W_h$. Using the fact that $\phi^2(X)=Q_2QX$, we find that in the basis $(e',f') = (\phi(X)^{-u-j} e, X^{-u-j} f)$, we have 
\[ \Mat(\phi) = \pmat{0 & 1 \\ Q^{-h} & 0}, \]
and using the fact that $\gamma(X)/X=\chi(\gamma) (1+\OL(X))$ and $\gamma(Q_n)/Q_n=1+\OL(X)$, we find
\[ \Mat(\gamma) = \pmat{1+\OL(X) & 0 \\ 0  & 1+\OL(X)}. \]
In particular, $\Zp\dcroc{X} \cdot e' \oplus \Zp\dcroc{X} \cdot f'$ is a Wach module as defined in \cite[\S III.4]{LB4}. By proposition III.4.2 and theorem III.4.4 of \cite{LB4}, the $(\phi,\Gamma)$-module $\dfont_j(u)$ associated to this Wach module corresponds to a crystalline representation $V$ such that in the basis $(\overline{e}',\overline{f}')$ of  $\dcris(V)$ we have 
\[ \Mat(\phi) = \pmat{ 0 & 1 \\ p^{-h} & 0} \quad\text{and}\quad \Fil^i \dcris(V) = \begin{cases} \dcris(V) & \text{if $i \leq 0$,} \\ \Qp \cdot \overline{e}' & \text{if $1 \leq i \leq -h$,} \\
\{0\} & \text{if $1-h \leq i$}. \end{cases} \]  
By proposition \ref{indiscris} and the fact that $V \mapsto \dcris(V)$ is fully faithful by theorem 5.3.5 of \cite{FST}, we have $V = W_h$ and so $\dfont_j(u) = \dfont(V_j(j+(p+1)u))$.
\end{proof}

\section{Determination of the trianguline points}
Colmez has defined (see definitions 4.1 and 3.4 of \cite{C08}) a $p$-adic representation to be trianguline if its associated $(\phi,\Gamma)$-module over $\calR$ is an iterated extension of $(\phi,\Gamma)$-modules of rank $1$, after possibly extending scalars. In this last chapter, we determine which of the representations $V_r(s)$ are trianguline. The key point is the result below.

\begin{prop}\label{nectriang}
If $\dfont_j(u)$ is trianguline, then $(p+1)u \in \ZZ$.
\end{prop}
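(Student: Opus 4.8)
The goal is to show that if $\dfont_j(u)$ admits a nontrivial sub-$(\phi,\Gamma)$-module of rank $1$ over $\calR$ (after extending scalars to some finite $E/\Qp$), then $(p+1)u \in \ZZ$. The plan is to analyze what such a sub-object looks like concretely, using the very explicit matrices of Definition \ref{defpgm}, and to extract an integrality constraint on the "Hodge-Tate weight" of the sub-object, which will be essentially $\pm(j+(p+1)u)$ up to an integer; forcing this to be an integer gives $(p+1)u \in \ZZ$.

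\begin{proof}[Sketch of the strategy]
Suppose $\dfont_j(u)$, base-changed to $\calR_E$, contains a rank-$1$ sub-$(\phi,\Gamma)$-module $L$. Write $L = \calR_E \cdot (ae + bf)$ for some $a, b \in \calR_E$, not both zero; after clearing denominators and common factors we may assume $a, b \in \calR_E^{\dagger,r}$ generate the unit ideal locally, i.e. have no common zero on the relevant annulus. The condition that $L$ is $\phi$-stable says that $\phi(ae+bf)$ is an $\calR_E$-multiple of $ae+bf$; using $\Mat(\phi) = \smat{0 & 1 \\ Q_2^j(Q_2/Q^p)^u & 0}$ this reads $\phi(b) = \lambda a$ and $Q_2^j(Q_2/Q^p)^u \phi(a) = \lambda b$ for some $\lambda \in \calR_E$, hence $Q_2^j(Q_2/Q^p)^u \phi^2(a)/a = \phi(b)\phi^2(a)/(ab)$, and similarly for $b$. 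The key observation is that $L$ being $\Gamma$-stable forces the ideal generated by $a$ (resp. $b$) in each $\calR_E^{\dagger,r}$ to be $\Gamma$-stable, so by Lemma \ref{gamstab} we may take $a$ and $b$ to be of the form $t^{m}$ times a unit, i.e. up to units $a$ and $b$ are products of powers of the $Q_n/p$; combined with the $\phi$-equivariance (which permutes the $Q_n$'s by $\phi(Q_n) = Q_{n+1}$) one shows $a$ and $b$ differ from powers of $t$ by units of $\calR_E$, so $L$ is, up to a unit twist, generated by a vector $t^{m_1}e + c\, t^{m_2}f$ with $c$ a unit.

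Now I would feed this back into the $\phi$- and $\Gamma$-actions to read off the eigen-data of $L$. The $(\phi,\Gamma)$-module $L$ is of rank $1$, hence of the form $\calR_E(\delta)$ for a continuous character $\delta$, and its "generalized Hodge-Tate weight" (the integer $m$ such that $t^{-m}L$ has a $\phi$-fixed, $\Gamma$-fixed generator over the relevant subring) is forced by the matrices above. Concretely, from $\Mat(\phi)$ the relevant valuation is controlled by $\operatorname{val}$ at the $Q_n$-points of $Q_2^j(Q_2/Q^p)^u$, which along the quotient $L$ contributes a power of $t$ with exponent an affine function of $j$ and $u$; and from $\Mat(\gamma)$ the $\Gamma$-part contributes $\chi(\gamma)^j\diam{\chi(\gamma)}^u$ times units. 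The upshot is that the character $\delta$ attached to $L$ (or to the rank-$1$ quotient) is, up to a character of the form $x \mapsto x^m$ with $m \in \ZZ$ and up to an unramified and a finite-order twist, equal to $\diam{\cdot}^u$ restricted appropriately, together with a constraint making $\delta|_{\Zp^\times}$ arise from an actual de Rham (indeed Hodge-Tate) weight. Computing the Hodge-Tate-Sen weight of $\dfont_j(u)$ along $L$ using Lemma \ref{resinz}: since any element of $\calR_E^{\dagger,r}$ that is a unit times a power of $t$ has $\res_n(\partial(\cdot)/(\cdot)) \in \ZZ$, and the $\Gamma$-action on $t^m\cdot(\text{unit})$ scales by $\chi(\gamma)^m\cdot(1+\OL(X))$, the Sen weight of $L$ lies in $\ZZ + (j + (p+1)u)\Zp$-data in a way that, once the rank-$1$ object is genuinely de Rham/Hodge-Tate over $\calR_E$ (which a sub of an étale module forces in the relevant sense, or which Lemma \ref{resinz} forces directly), must be an integer. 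Subtracting the integer coming from the $Q_2^j$-part leaves $(p+1)u \in \ZZ$.

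\textbf{Main obstacle.} The delicate step is the second paragraph: turning "$L$ is $\phi$- and $\Gamma$-stable of rank one" into the statement that the generator can be normalized to $t^{m_1}e + c\,t^{m_2}f$ with $c$ a unit, i.e. controlling the possible zeros and poles of the coordinates $a,b$ of a generating vector. Lemma \ref{gamstab} handles $\Gamma$-stability of principal ideals, but one must also use $\phi$-compatibility (the relation $\phi(b) = \lambda a$, $Q_2^j(Q_2/Q^p)^u\phi(a) = \lambda b$) to pin down how the $Q_n$-valuations of $a$ and $b$ are linked, and argue that the only consistent solutions up to units are powers of $t$ shifted by the contribution of $Q_2^j(Q_2/Q^p)^u$. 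Once that is in place, extracting the integrality of the Sen weight via Lemma \ref{resinz} and reading off $(p+1)u \in \ZZ$ is essentially bookkeeping with the explicit matrices.
\end{proof}
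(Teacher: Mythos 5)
Your setup matches the paper's in outline: take a rank-one sub-object generated by $\alpha e+\beta f$ over $\calR_E$, note that the diagonal form of $\Mat(\gamma)$ forces the ideals $(\alpha)$ and $(\beta)$ to be $\Gamma$-stable, invoke lemma \ref{gamstab} to locate their zeros among the $\zeta_{p^n}-1$, and use lemma \ref{resinz} at the end. But the core of the argument is missing, as you yourself concede in your ``main obstacle'' paragraph. Two concrete problems. First, your proposed normalization of the generator to $t^{m_1}e+c\,t^{m_2}f$ with $c$ a unit is not correct: lemma \ref{gamstab} only gives that $\beta$ is, up to a unit, a product $\prod_n (Q_n/p)^{a_n}$ whose exponents may depend on the parity of $n$. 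In fact the whole content of the proposition is that these exponents are forced to equal some $c\in\ZZ$ for $n$ odd and $c+j+(p+1)u$ for $n$ even, and the integrality of the latter \emph{is} the conclusion; assuming $\beta$ is a power of $t$ times a unit presupposes (a strengthening of) what must be proved.

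Second, the step you dismiss as ``bookkeeping'' with Sen weights is where the actual work lies, and the mechanism is absent. The paper proceeds as follows: $\phi$-stability gives $\phi^2(\beta)\,Q_2^j(Q_2/Q^p)^u=\delta(p)^2\beta$; one introduces the auxiliary function $\mu=\prod_{n\geq 1}Q_{2n}/p$, which satisfies $\phi^2(\mu)Q_2=p\mu$ and $\phi^2(\mu^{p+1}X^p)\,(Q_2/Q^p)=p^{p+1}\mu^{p+1}X^p$; taking logarithmic derivatives and using $\partial\circ\phi^2=p^2\phi^2\circ\partial$, one finds that $\partial\beta/\beta-j\,\partial\mu/\mu-u\,\partial(\mu^{p+1}X^p)/(\mu^{p+1}X^p)$ is killed by $1-p^2\phi^2$, hence equals $c/t$ for some $c\in E$ by lemma \ref{phip} (this uses that all three terms lie in $t^{-1}\calR_E$, which is where lemma \ref{gamstab} enters). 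Evaluating $\res_n$ at odd $n$, where $\mu$ and $X$ have neither zero nor pole, gives $c=\res_n(\partial\beta/\beta)\in\ZZ$ by lemma \ref{resinz}; evaluating at even $n$ gives $\res_n(\partial\beta/\beta)-j-(p+1)u=c$, whence $(p+1)u\in\ZZ$. Your sketch never produces $\mu$, never pins down the constant $c$, and never performs the two-parity residue comparison; without these the integrality you want cannot be extracted, so the proposal has a genuine gap at its center even though the ingredients you name are the right ones.
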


\begin{proof}
By definition 3.4 and proposition 3.1 of \cite{C08}, $\dfont_j(u)$ is trianguline if and only if there exist some finite extension $E/\Qp$, a continuous character $\delta : \Qp^\times \to E^\times$ and $\alpha,\beta \in \calR_E$ such that 
\[ \gamma(\alpha e + \beta f) = \delta(\gamma) (\alpha e + \beta f) \ \text{if}\  \gamma \in \Gamma \quad\text{and}\quad  \phi(\alpha e + \beta f) = \delta(p) (\alpha e + \beta f). \]
Given the formulas of definition \ref{defpgm}, the first condition implies that the ideals of $\calR_E$ generated by $\alpha$ and $\beta$ are each stable under the action of $\Gamma$, and the second condition implies that $\beta$ satisfies the equation 
\[ \phi^2(\beta) Q_2^j \left(\frac{Q_2}{Q^p}\right)^u = \delta(p)^2 \beta. \]
Let $\mu(X) \in \calR_E$ be the power series $\mu(X) = \prod_{n=1}^{\infty} Q_{2n}/p$ so that we have
\[ \phi^2(\mu) Q_2   = p \mu \quad\text{and}\quad 
\phi^2(\mu^{p+1}X^p) \frac{Q_2}{Q^p} = p^{p+1} \mu^{p+1}X^p. \]
If we apply the map $\partial(\cdot)/(\cdot)$ to the above three equations, bearing in mind that $\partial \circ \phi^2 = p^2 \phi^2 \circ \partial$, we get 
\begin{align*}
(1-p^2\phi^2) \frac{\partial \beta}{\beta} & = j \frac{\partial Q_2}{Q_2} + u \frac{\partial (Q_2/Q^p)}{Q_2/Q^p}, \\
(1-p^2\phi^2) \frac{\partial \mu}{\mu} & = \frac{\partial Q_2}{Q_2}, \\
(1-p^2\phi^2) \frac{\partial(\mu^{p+1}X^p) }{\mu^{p+1}X^p} & = \frac{\partial (Q_2/Q^p)}{Q_2/Q^p}, 
\end{align*}
so that
\[ (1-p^2\phi^2) \left( \frac{\partial \beta}{\beta} - j  \frac{\partial \mu}{\mu} - u \frac{\partial(\mu^{p+1}X^p) }{\mu^{p+1}X^p} \right) = 0. \]
Since the ideal of $\calR_E$ generated by $\beta$ is stable under the action of $\Gamma$, lemma \ref{gamstab} implies that the only possible zeroes of $\beta$ are the $\zeta_{p^n}-1$ and the same is true for $\mu$, so that $\partial \beta / \beta$ and $\partial \mu / \mu$ and $\partial(\mu^{p+1}X^p)  / \mu^{p+1}X^p$ all belong to $t^{-1} \calR_E$ since logarithmic derivatives have only simple poles. The above equation and lemma \ref{phip} imply that there exists $c \in E$ such that
\[ \frac{\partial \beta}{\beta} - j  \frac{\partial \mu}{\mu} - u \frac{\partial(\mu^{p+1}X^p) }{\mu^{p+1}X^p} = \frac{c}{t}. \]
We now fix a radius $r$ such that $\beta \in \calR_E^{\dagger,r}$ and apply the residue maps $\res_n$ of \S \ref{psr} for $n \geq n(r)$. If $n$ is odd, then $\mu(X)$ has no zero nor pole at $\zeta_{p^n}-1$ and hence $\res_n(\partial \mu / \mu)=0$, which implies that $\res_n(c/t) = \res_n(\partial \beta / \beta) \in \ZZ$ by lemma \ref{resinz} and therefore $c \in \ZZ$ since $\res_n(1/t)=\res_n(\partial t/t) =1$. If $n$ is even, then by applying once more $\res_n$ to the above equation we get $\res_n(\partial \beta/\beta) -j-(p+1)u = c$ and since both $c$ and $\res_n(\partial \beta / \beta)$ belong to $\ZZ$, we also have $(p+1)u \in \ZZ$.
\end{proof}

\begin{theo}\label{indtrig}
The representation $V_r(s)$ is trianguline if and only if $s \in \ZZ$ and $r=s \bmod{p+1}$.
\end{theo}

\begin{proof}
If $s \in \ZZ$ and $r-s=(p+1)k$, then \[ \Qpp \otimes_{\Qp} V_r(s) = \indpp(\omega_2^{r-s} \chi_2^s) =\indpp(\chi_2^s)  \otimes \omega_1^k \] where $\omega_1=\omega_2^{p+1}=\omega(\chi(\cdot))$. By proposition \ref{indiscris}, $\indpp(\chi_2^s)$ is crystalline and hence trianguline by theorem 0.8 of \cite{C08} so that $V_r(s)$ is trianguline if $s \in \ZZ$ and $r=s \bmod{p+1}$.

Assume now that $V_r(s)$ is trianguline. By combining theorem \ref{pgmisind} and proposition \ref{nectriang}, we see that $s \in \ZZ$ so that we have $V_r(s) = \indpp(\omega_2^{r-s} \chi_2^s)$ and $V_r(s)$ is potentially crystalline. By theorem 0.8 of \cite{C08}, a trianguline $p$-adic representation is potentially crystalline if and only if its restriction to $\Gal(\Qpbar/\Qp(\zeta_{p^n}))$ is crystalline for some $n \gg 0$. By restricting $V_r(s)$ to $\galpp$, we see that the fixed field of $\omega_2^{r-s}$ must lie in $\Qpp(\zeta_{p^n})$ for some $n \gg 0$ and hence that $r-s$ is divisible by $p+1$.
\end{proof}

\bibliographystyle{smfalpha}
\bibliography{dihedral}
\end{document}